
\documentclass[a4paper,12pt,oneside]{amsart}

\usepackage[applemac]{inputenc}
\usepackage[british]{babel}
\usepackage{fancyhdr}
\usepackage{geometry}
\usepackage{setspace}\setstretch{1.1}
\usepackage{amssymb}
\usepackage{bbm}
\usepackage[all]{xy}
\usepackage{color}\definecolor{citation}{rgb}{0.2,0.3,1}
\usepackage[final,colorlinks,citecolor=citation,pagebackref,linktocpage]{hyperref}

\pagestyle{fancy}
\lhead{\footnotesize }
\chead{\footnotesize On certain properties and invariants of graded rings and modules}
\rhead{\footnotesize }
\lfoot{\footnotesize }
\cfoot{\footnotesize }
\rfoot{\footnotesize\thepage }

\setlength{\footskip}{30pt}
\newcommand{\leadingzero}[1]{\ifnum #1<10 0\the#1\else\the#1\fi}
\AtBeginDocument{\renewcommand{\today}{\the\year\leadingzero{\month}\leadingzero{\day}}}

\newlength{\aufzleft}
\newenvironment{mylist}[1]{\begin{list}{}{\setlength{\listparindent}{0pt}\setlength{\itemsep}{\topsep}\setlength{\labelwidth}{#1ex}\setlength{\aufzleft}{\labelsep}\addtolength{\aufzleft}{\labelwidth}\setlength{\leftmargin}{\aufzleft}}}{\end{list}}
\newenvironment{aufz}{\begin{mylist}{3.2}}{\end{mylist}}

\frenchspacing


\newtheoremstyle{par}{1ex}{2ex}{\rm}{}{\bfseries}{}{0.8em}{\thmnumber{(#2)}}
\newtheoremstyle{thm}{1ex}{2ex}{\itshape}{}{\bfseries}{}{0.9em}{\thmnumber{(#2)}\thmname{ #1}\thmnote{ (#3)}}
\newtheoremstyle{ex}{1ex}{2ex}{\rm}{}{\bfseries}{}{0.8em}{\thmnumber{(#2)}\thmname{ #1}}

\theoremstyle{par}
\newtheorem{no}{}[section]

\theoremstyle{thm}
\newtheorem{lemma}[no]{Lemma}
\newtheorem{prop}[no]{Proposition}
\newtheorem{cor}[no]{Corollary}
\newtheorem{thm}[no]{Theorem}

\theoremstyle{ex}
\newtheorem{exas}[no]{Examples}


\newcommand{\dfgl}{\mathrel{\mathop:}=}
\newcommand{\ph}{\varphi}
\newcommand{\N}{\mathbbm{N}}
\newcommand{\Z}{\mathbbm{Z}}
\newcommand{\Id}{\mathrm{Id}}
\newcommand{\hm}[3]{{\rm Hom}_{#1}(#2,#3)}
\newcommand{\ab}{{\sf Ab}}
\newcommand{\ann}{{\sf Ann}}
\newcommand{\md}{{\sf Mod}}
\newcommand{\C}{\sf{C}}
\newcommand{\D}{\sf{D}}
\newcommand{\ps}{_{[\psi]}}
\newcommand{\lph}{_{(\!(\ph)\!)}}
\newcommand{\ia}{\mathfrak{a}}
\renewcommand{\P}{\mathcal{P}}

\DeclareMathOperator{\ke}{Ker}
\DeclareMathOperator{\im}{Im}
\DeclareMathOperator{\degsupp}{degsupp}
\DeclareMathOperator{\card}{Card}
\DeclareMathOperator{\ob}{Ob}
\DeclareMathOperator{\dimp}{dp}
\DeclareMathOperator{\dimi}{di}
\DeclareMathOperator{\dimpl}{dpl}
\DeclareMathOperator{\rk}{rk}

\newcommand{\hme}{^{\rm hom}}
\DeclareMathOperator{\nzd}{Nzd}
\DeclareMathOperator{\zd}{Zd}
\DeclareMathOperator{\nil}{Nil}
\newcommand{\ip}{\mathfrak{p}}
\DeclareMathOperator{\spec}{Spec}
\DeclareMathOperator{\var}{Var}
\newcommand{\R}{\mathbbm{R}}
\DeclareMathOperator{\diff}{Diff}
\newcommand{\loccit}{\textit{loc.\,cit.\ }}



\begin{document}

\title{On certain properties and invariants\\of graded rings and modules}
\author{Fred Rohrer}

\address{Grosse Grof 9, 9470 Buchs, Switzerland}
\email{fredrohrer@math.ch}
\subjclass[2010]{Primary 13A02; Secondary 13C10, 13C11, 13F10, 18G20}
\keywords{Graded ring, graded module, coarsening functor, degree restriction functor}

\begin{abstract}
The behaviour under coarsening functors of simple, entire, or reduced graded rings, of free graded modules over principal graded rings, of superfluous monomorphisms and of homological dimensions of graded modules, as well as adjoints of degree restriction functors, are investigated.
\end{abstract}

\maketitle\thispagestyle{fancy}


\section*{Introduction}

Rings and modules graded by a group are ubiquitous in algebra. It is thus useful to compare them with their ungraded companions. N\u{a}st\u{a}sescu and Van Oystaeyen's standard references on this subject (\cite{nvo1}, \cite{nvo2}) cover a lot of properties of (the categories of) such graded structures. The aim of this note is to complement and extend several results found therein.

Before explaining the results in detail we recall the basic definitions as well as notation and terminology used throughout. In general, we follow Bourbaki's \textit{\'El\'ements de math\'ematique.} Monoids and groups are understood to be additively written and commutative, and rings are understood to be commutative.\smallskip

\textbf{Graded rings and modules.} Let $G$ be a group. A \textit{$G$-graded ring} is a pair $(R,(R_g)_{g\in G})$ consisting of a ring $R$ and a family $(R_g)_{g\in G}$ of subgroups of the additive group of $R$ whose direct sum equals the additive group of $R$ such that $R_gR_h\subseteq R_{g+h}$ for all $g,h\in G$. Usually we denote a $G$-graded ring $(R,(R_g)_{g\in G})$ just by $R$. If $R$ and $S$ are $G$-graded rings, then a \textit{morphism of $G$-graded rings from $R$ to $S$} is a morphism of rings $u\colon R\rightarrow S$ such that $u(R_g)\subseteq S_g$ for $g\in G$. We denote by $\ann^G$ the category of $G$-graded rings with the above notion of morphisms.

Let $R$ be a $G$-graded ring. A \textit{$G$-graded $R$-module} is a pair $(M,(M_g)_{g\in G})$ consisting of an $R$-module $M$ and a family $(M_g)_{g\in G}$ of subgroups of the additive group of $M$ whose direct sum equals the additive group of $M$ such that $R_gM_h\subseteq M_{g+h}$ for all $g,h\in G$. Usually we denote a $G$-graded $R$-module $(M,(M_g)_{g\in G})$ just by $M$. If $M$ and $N$ are $G$-graded $R$-modules, then a \textit{morphism of $G$-graded $R$-modules from $M$ to $N$} is a morphism of $R$-modules $u\colon M\rightarrow N$ such that $u(M_g)\subseteq N_g$ for $g\in G$. We denote by $\md(R)$ the category of $G$-graded $R$-modules with the above notion of morphisms. Furthermore, for a $G$-graded $R$-module $M$ we denote by $M\hme\dfgl\bigcup_{g\in G}M_g$ the set of homogeneous elements of $M$. An element $x\in R\hme$ is called \textit{invertible} or a \textit{unit} if there exists $y\in R\hme$ (or equivalently, $y\in R$) such that $xy=1$. We denote by $R^*$ the multiplicative group of invertible homogeneous elements of $R$. The set $\degsupp(R)\dfgl\{g\in G\mid R_g\neq 0\}$ is called \textit{the degree support of $R$.} The $G$-graded ring $R$ is called \textit{trivially $G$-graded} if $\degsupp(R)\subseteq 0$, i.e., if $R_g=0$ for every $g\in G\setminus 0$; this holds if and only if every element of $R$ is homogeneous. Further notation and terminology for graded rings and modules follow \cite{cihf} and \cite{gic}.

It might be tempting -- and the author admits having yielded to this temptation in previous work -- to denote the category of $G$-graded $R$-modules by $\md^G(R)$ or some such symbol containing the letter $G$. However, from the point of view of our yoga of coarsening this would be bad, or at least unnecessary, since the group $G$ is \textit{inherent to $R$,} which is not just a ring, but a \textit{$G$-graded ring.} As we throughout stress coarsening functors, and in particular forgetful functors, our choice of notation should not give rise to confusion. Similarly, we will denote $G$-graded Hom modules by $\hm{R}{M}{N}$ instead of, e.g., ${}^G\hm{R}{M}{N}$ (cf. \ref{dim70}).\smallskip

\textbf{Coarsening functors.} Let $\psi\colon G\twoheadrightarrow H$ be an epimorphism of groups, and let $R$ be a $G$-graded ring. The \textit{$\psi$-coarsening $R\ps$ of $R$} is the $H$-graded ring whose underlying ring is the ring underlying $R$ and whose component of degree $h\in H$ is $\bigoplus_{g\in\psi^{-1}(h)}R_g$. An analogous construction for graded modules yields the \textit{$\psi$-coarsening functor} $\bullet\ps\colon\md(R)\rightarrow\md(R\ps)$, coinciding for $H=0$ with the functor that forgets the graduation. Coarsening functors allow us to compare categories of graded rings or modules with different groups of degrees and are therefore an important tool in the study of graded structures. We refer the reader to \cite[Section 1]{cihf} for some generalities on coarsening functors.\smallskip

\textbf{A remark on terminology.} When choosing names for properties of graded objects, it seems preferable to use adjectives instead of nouns. As an example we consider graded rings whose nonzero homogeneous elements are invertible. In the ungraded situation, such a ring is called a \textit{field.} But a \textit{graded field} may be a graded ring with the aforementioned property, or a field (in the ungraded sense) furnished with a graduation. To avoid this ambiguity (and borrowing terminology from non-commutative algebra) we will call such a graded ring a \textit{simple graded ring.} Similarly, we will speak of \textit{entire graded rings} instead of \textit{graded domains} (cf. \ref{ser10}). For aesthetic reasons we will avoid names like \textit{gr-field} or \textit{gr-domain} as in \cite{nvo1} and \cite{nvo2}.\smallskip

\textbf{Overview of the results.} 1.\ \textit{Adjoints of degree restriction.} Let $\ph\colon F\rightarrowtail G$ be a monomorphism of groups. For a $G$-graded ring $R$ we consider the \textit{$\ph$-restriction $R_{(\ph)}$ of $R$,} i.e., the $F$-graded ring with underlying ring $\bigoplus_{f\in F}R_{\ph(f)}$ whose component of degree $f\in F$ is $R_{\ph(f)}$. This can be extended to a functor $\bullet_{(\ph)}\colon\ann^G\rightarrow\ann^F$, coinciding for $F=0$ with the functor that takes the component of degree $0$. It is well-known that $\bullet_{(\ph)}$ is right adjoint to the \textit{$\ph$-extension functor} $\bullet^{(\ph)}$ that maps an $F$-graded ring $R$ to the $G$-graded ring $R^{(\ph)}$ with underlying ring the ring underlying $R$ whose component of degree $g\in G$ is $R_f$ if $f\in\ph^{-1}(g)$, and $0$ otherwise (\cite[1.2.1]{nvo2}). We extend this to an adjoint triple $(\bullet\lph,\bullet^{(\ph)},\bullet_{(\ph)})$ and show that if $\ph$ is not an isomorphism, then $\bullet\lph$ has no left adjoint and $\bullet_{(\ph)}$ has no right adjoint (\ref{adj30}, \ref{adj50}). Adjoints of degree restriction functors for graded modules were treated by Menini and N\u{a}st\u{a}sescu (\cite{mn}, \cite{nas}).\smallskip

2.\ \textit{Simplicity, entirety, and reducedness.}
A graded ring is \textit{simple,} \textit{entire,} or \textit{reduced} if all its nonzero homogeneous elements are invertible, regular, or not nilpotent. We study the behaviour under coarsening functors of simplicity and reducedness, while entirety was already treated in \cite{gic}. It is known that no non-trivial coarsening respects simplicity (\cite[2.12]{gic}). Conversely, we show that if $\ke(\psi)$ is torsionfree and $R\ps$ is simple, then $R$ and $R\ps$ have the same homogeneous elements (\ref{simp10}). As a special case we get back the known result that a field can only be trivially graded by a torsionfree group. Special cases of this, that inspired our proof, were considered in \cite[A.I.1.2.5]{nvo1} and \cite[1.3.10]{nvo2}. Further results, mostly in a non-commutative setting, were proven by Jespers in \cite{jespers}. Moreover, we prove that $\ke(\psi)$ is torsionfree if and only if $\psi$-coarsening respects reducedness or, equivalently, turns simple graded rings into reduced graded rings (\ref{ser110}); this was previously stated without proof in \cite[2.13]{gic}. As an application we present several results on entirety, reducedness and units of algebras of monoids graded in different ways (\ref{ser120}, \ref{ser135}).\smallskip

3.\ \textit{Free graded modules and principal graded rings} A graded module is \textit{free} if it has a basis consisting of homogeneous elements. Coarsening functors respect freeness, but do not reflect it; an example is given in \cite[A.I.2.6.2]{nvo1}. Inspired by this example, we show that no non-trivial coarsening reflects freeness (\ref{a20}). On the positive side, we prove that over a \textit{principal graded ring,} i.e., an entire graded ring all of whose graded ideals are generated by a single homogeneous element, coarsening functors respect and reflect freeness of graded modules (\ref{a90}). We will not discuss the rather delicate behaviour of principality under coarsening, but hope for its pursuit in future work.\smallskip

4.\ \textit{Superfluous monomorphisms.} It is well-known that essential monomorphisms of graded modules are respected and reflected by coarsening functors (\cite[A.I.2.8]{nvo1}). The dual notion is that of superfluous monomorphisms, and it is easy to see that superfluous monomorphisms are reflected by coarsening functors. Conversely, we show that no non-trivial coarsening respects superfluous monomorphisms (\ref{super10}). Our proof is inspired by an example of such behaviour given in \cite[A.I.2.9]{nvo1}.\smallskip

5.\ \textit{Homological dimensions.} Categories of graded modules are abelian with\linebreak enough projectives and injectives, hence we can define the projective dimension $\dimp(M)$ and the injective dimension $\dimi(M)$ of a graded module $M$. Moreover, there is a notion of flat graded module, and as flatness implies projectivity we can define the flat dimension $\dimpl(M)$ of $M$. In \cite[3.1, 3.2]{fofo} and \cite[A.I.2.7, A.I.2.19]{nvo1} it is stated, essentially without proof, that coarsening functors preserve $\dimp(M)$ and $\dimpl(M)$; a more explicit proof for $\dimp(M)$ was given by Rigal and Zadunaisky (\cite{rz}). We provide a complete proof of Schanuel's Lemma for abelian categories (\ref{dim10}) and derive some consequences on preservation of projective dimension by exact functors; the original result for modules over not necessarily commutative rings is given in \cite[Theorem III.1]{kaplansky}. Although these results are probably well-known, their proofs in the desired generality seem to be not readily available in the literature. Furthermore, we prove a graded variant of Lambek's Lemma that may be of interest on its own (\ref{dim90}); the ungraded original can be found in \cite{lambek}. Then, we apply these results to obtain a complete proof of preservation of projective and flat dimensions by coarsening functors, and using a previous result on coarsening of injectives we show that $\bullet\ps$ preserves $\dimi(M)$ if and only if $\ke(\psi)$ is finite (\ref{dim120}). For more specific results on the behaviour of the injective dimension under coarsening functors we refer the reader to the recent work of Rigal, Solotar and Zadunaisky (\cite{rz}, \cite{sz}).\smallskip

We present graded variants of several well-known facts with complete proofs; it should be noted that although these proofs are often similar or even the same as (or rather graded variants of) the ungraded ones, the graded results do not just follow from the ungraded ones. The reason is usually that some of the involved notions do not behave well under coarsening.\smallskip

\noindent\textit{Throughout the following, let $\psi\colon G\twoheadrightarrow H$ be an epimorphism of groups, and let $R$ be a $G$-graded ring.}


\section{Adjoints of degree restriction}

\noindent\textit{Throughout this section, let $\ph\colon F\rightarrowtail G$ be a monomorphism of groups.}

\begin{no}
A) For a $G$-graded ring $R$ we define an $F$-graded ring $R_{(\ph)}$ whose underlying ring is the subring $\bigoplus_{f\in F}R_{\ph(f)}$ of the ring underlying $R$ and whose $F$-graduation is $(R_{\ph(f)})_{f\in F}$. A morphism of $G$-graded rings $h\colon R\rightarrow S$ induces by restriction and coastriction a morphism of $F$-graded rings $h_{(\ph)}\colon R_{(\ph)}\rightarrow S_{(\ph)}$. This gives rise to a functor $\bullet_{(\ph)}\colon\ann^G\rightarrow\ann^F$, called \textit{$\ph$-restriction.}\smallskip

B) For an $F$-graded ring $R$ we define a $G$-graded ring $R^{(\ph)}$ whose underlying ring is the ring underlying $R$ and whose $G$-graduation is given by $(R^{(\ph)})_g=0$ for $g\in G\setminus\im(\ph)$ and $(R^{(\ph)})_g=R_f$ for $g=\ph(f)\in\im(\ph)$. A morphism of $F$-graded rings $h\colon R\rightarrow S$ can be considered as a morphism of $G$-graded rings $h^{(\ph)}\colon R^{(\ph)}\rightarrow S^{(\ph)}$. This gives rise to a functor $\bullet^{(\ph)}\colon\ann^F\rightarrow\ann^G$, called \textit{$\ph$-extension.}  If $F=0$, then we write $\bullet^{(G)}$ instead of $\bullet^{(\ph)}$.\smallskip

C) For a $G$-graded ring $R$ we consider the graded ideal $\ia_\ph(R)\dfgl\langle\bigcup_{g\in G\setminus\im(\ph)}R_g\rangle_R$ of $R$ and the $F$-graded ring $R\lph=(R/\ia_\ph(R))_{(\ph)}$. The underlying ring of $R\lph$ being the ring underlying $R/\ia_\ph(R)$, a morphism of $G$-graded rings $h\colon R\rightarrow S$ induces by corestriction and astriction a morphism of $F$-graded rings $h\lph\colon R\lph\rightarrow S\lph$. This gives rise to a functor $\bullet\lph\colon\ann^G\rightarrow\ann^F$, called \textit{$\ph$-corestriction.}\smallskip

D) If $S$ is an $F$-graded ring, then $(S^{(\ph)})_{(\ph)}=S=(S^{(\ph)})\lph$. Let $R$ be a $G$-graded ring. Then, $(R_{(\ph)})^{(\ph)}$ is a $G$-graded subring of $R$, so we have a monomorphism of $G$-graded rings $(R_{(\ph)})^{(\ph)}\hookrightarrow R$ that is natural in $R$. Furthermore, for $g\in G\setminus\im(\ph)$ we have $((R\lph)^{(\ph)})_g=0$, so the zero morphism is an epimorphism of $R_0$-modules $\alpha_\ph(R)_g\colon R_g\twoheadrightarrow((R\lph)^{(\ph)})_g$. For $g\in\im(\ph)$ we have $((R\lph)^{(\ph)})_g=R_g/\sum_{h\in G\setminus\im(\ph)}R_hR_{g-h}$, so the canonical projection is an epimorphism of $R_0$-modules $\alpha_\ph(R)_g\colon R_g\twoheadrightarrow((R\lph)^{(\ph)})_g$. Thus, $(\alpha_\ph(R)_g)_{g\in G}$ defines a surjective morphism of $G$-graded rings $\alpha_\ph(R)\colon R\twoheadrightarrow(R\lph)^{(\ph)}$ that is natural in $R$.
\end{no}

\begin{prop}\label{adj10}
a) If there exists $x\in R^*$ with $\deg(x)\notin\im(\ph)$, then $R\lph=0$.

b) If $(\degsupp(R)\setminus\im(\ph)+\degsupp(R)\setminus\im(\ph))\cap\degsupp(R)\cap\im(\ph)=\emptyset$, then $R\lph=R_{(\ph)}$; the converse holds if $R$ is entire.
\end{prop}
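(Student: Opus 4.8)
The plan is to reduce both parts to a description of the homogeneous components of the ideal $\ia_\ph(R)$. Since $\ia_\ph(R)$ is generated by homogeneous elements it is a graded ideal, and its component of degree $g\in G$ is $(\ia_\ph(R))_g=\sum_{h\in G\setminus\im(\ph)}R_{g-h}R_h$. From this I would extract two observations. First, the underlying ring of $R\lph=(R/\ia_\ph(R))_{(\ph)}$ is the zero ring if and only if $\ia_\ph(R)=R$, because the class of $1$ generates its degree-$0$ component. Second, the projection $R\twoheadrightarrow R/\ia_\ph(R)$ restricts in each degree $\ph(f)$ (for $f\in F$) to $R_{\ph(f)}\twoheadrightarrow R_{\ph(f)}/(\ia_\ph(R))_{\ph(f)}$, and these assemble into a natural surjection $R_{(\ph)}\twoheadrightarrow R\lph$ of $F$-graded rings; hence $R\lph=R_{(\ph)}$ (via this natural map) if and only if $(\ia_\ph(R))_g=0$ for every $g\in\im(\ph)$.

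For a), if $x\in R^*$ has $d\dfgl\deg(x)\notin\im(\ph)$, then $x\in R_d$ is one of the homogeneous generators of $\ia_\ph(R)$, so $x\in\ia_\ph(R)$; since $x$ is invertible, $1=x^{-1}x\in\ia_\ph(R)$, whence $\ia_\ph(R)=R$ and $R\lph=0$ by the first observation.

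For the forward implication of b), I would translate the hypothesis into the vanishing condition above. Fix $g\in\im(\ph)$ and $h\in G\setminus\im(\ph)$ and show $R_{g-h}R_h=0$. If $R_h=0$ or $R_{g-h}=0$ this is clear, so assume both are nonzero; then $h\in\degsupp(R)\setminus\im(\ph)$, and since $g\in\im(\ph)$ forces $g-h\notin\im(\ph)$, also $g-h\in\degsupp(R)\setminus\im(\ph)$. Thus $g=(g-h)+h$ lies in $\degsupp(R)\setminus\im(\ph)+\degsupp(R)\setminus\im(\ph)$, so the hypothesis gives $g\notin\degsupp(R)\cap\im(\ph)$; as $g\in\im(\ph)$ this means $R_g=0$, and therefore $R_{g-h}R_h\subseteq R_g=0$. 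Hence $(\ia_\ph(R))_g=0$ for all $g\in\im(\ph)$, and $R\lph=R_{(\ph)}$.

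For the converse under entirety I would argue contrapositively. Suppose the set in the statement is nonempty, say $g=h_1+h_2$ with $h_1,h_2\in\degsupp(R)\setminus\im(\ph)$ and $g\in\degsupp(R)\cap\im(\ph)$. Choose nonzero $u\in R_{h_1}$ and $v\in R_{h_2}$. Entirety means every nonzero homogeneous element is regular, so $u$ regular and $v\neq0$ give $uv\neq0$; moreover $v\in R_{h_2}\subseteq\ia_\ph(R)$ (as $h_2\notin\im(\ph)$), so $0\neq uv\in(\ia_\ph(R))_g$ with $g\in\im(\ph)$, and the natural surjection is not bijective, i.e. $R\lph\neq R_{(\ph)}$. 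The only genuine subtlety is this last step: it is exactly here that entirety is indispensable, since without a regularity hypothesis a product of two nonzero homogeneous elements may vanish, so the combinatorial support condition can be strictly stronger than $R\lph=R_{(\ph)}$; everything else is bookkeeping with the graded components of $\ia_\ph(R)$.
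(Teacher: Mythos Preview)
Your proof is correct and follows essentially the same approach as the paper's: both arguments reduce to analysing the graded component $(\ia_\ph(R))_g=\sum_{h\notin\im(\ph)}R_{g-h}R_h$ for $g\in\im(\ph)$, and the entirety hypothesis is used in exactly the same way to produce a nonzero product $uv\in(\ia_\ph(R))_g$. The only cosmetic difference is that you argue the forward implication in b) directly while the paper phrases it contrapositively, and you make the description of $(\ia_\ph(R))_g$ and the surjection $R_{(\ph)}\twoheadrightarrow R\lph$ explicit where the paper leaves them implicit.
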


\begin{proof}
a) If $g\in G\setminus\im(\ph)$, $x\in R_g$ and $y\in R_{-g}$ with $xy=1$, then $1\in\ia_\ph(R)$, hence $R\lph=0$.

b) If $R\lph\neq R_{(\ph)}$, then there exist $g,h\in G\setminus\im(\ph)$, $x\in R_g$ and $y\in R_h$ with $g+h\in\im(\ph)$ and $xy\neq 0$, hence with $x\neq 0$ and $y\neq 0$, implying that $g,h\in\degsupp(R)$. Conversely, suppose that $R$ is entire. If there exist $g,h\in\degsupp(R)\setminus\im(\ph)$ with $g+h\in\im(\ph)$, then there exist $x\in R_g\setminus 0$ and $y\in R_h\setminus 0$. Entirety implies $0\neq xy\in\ia_\ph(R)_{g+h}$, and thus $(R\lph)_{g+h}\neq R_{g+h}=(R_{(\ph)})_{g+h}$.
\end{proof}

\begin{cor}
If $\degsupp(R)\cap(-\degsupp(R))=0$, then $R_{(\!(0)\!)}=R_{(0)}$; the converse holds if $R$ is entire.
\end{cor}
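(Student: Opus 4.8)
The plan is to obtain this corollary as the special case $F=0$ of Proposition \ref{adj10} b). Taking $\ph$ to be the canonical monomorphism $0\rightarrowtail G$, we have $\im(\ph)=0$, and under the conventions fixed in \ref{dim70}-style notation the ring $R\lph$ becomes $R_{(\!(0)\!)}$ while $R_{(\ph)}$ becomes $R_{(0)}$. It therefore suffices to check that, for this choice of $\ph$, the set-theoretic hypothesis appearing in \ref{adj10} b) specialises exactly to the condition $\degsupp(R)\cap(-\degsupp(R))=0$. Once this equivalence of hypotheses is established, both halves of the corollary — the implication with no extra assumption and the converse under entirety — follow verbatim from the two corresponding halves of \ref{adj10} b).

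To carry out the specialisation I would substitute $\im(\ph)=0$ into the displayed condition of \ref{adj10} b). The last two factors combine into $\degsupp(R)\cap 0$; since $1\in R_0$ we have $0\in\degsupp(R)$, so this intersection equals $0$. The whole condition thus collapses to the requirement that $0$ not lie in $(\degsupp(R)\setminus 0)+(\degsupp(R)\setminus 0)$, i.e. that there be no pair $g,h\in\degsupp(R)\setminus 0$ with $g+h=0$. Rewriting this in terms of $-\degsupp(R)$ is immediate: a nonzero $g\in\degsupp(R)$ with $-g\in\degsupp(R)$ is precisely a witness $g,h=-g$ to such a forbidden pair (both nonzero since $g\neq 0$), and conversely any forbidden pair gives $h=-g$; hence the specialised condition is equivalent to $\degsupp(R)\cap(-\degsupp(R))\subseteq 0$, and, using once more that $0$ lies in both $\degsupp(R)$ and $-\degsupp(R)$, to the stated equality $\degsupp(R)\cap(-\degsupp(R))=0$.

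I expect no genuine obstacle here, as all the content is already carried by \ref{adj10}. The only point requiring a little care is the bookkeeping of the special case — in particular remembering that $0\in\degsupp(R)$, which is what forces the factor $\degsupp(R)\cap\im(\ph)$ to equal $0$ rather than being empty and makes the intersection with $\degsupp(R)$ reduce to singling out the degree $0$.
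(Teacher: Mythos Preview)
Your proposal is correct and takes essentially the same approach as the paper, which simply says ``Immediately from \ref{adj10} b) with $F=0$''; you have merely spelled out the bookkeeping of that specialisation. One small remark: your justification ``since $1\in R_0$ we have $0\in\degsupp(R)$'' tacitly assumes $R\neq 0$, but this is harmless here since the hypothesis $\degsupp(R)\cap(-\degsupp(R))=\{0\}$ already forces $0\in\degsupp(R)$, and in the converse entirety gives $R\neq 0$.
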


\begin{proof}
Immediately from \ref{adj10} b) with $F=0$.
\end{proof}

\begin{exas}\label{adj20}
A) Let $A$ be an $F$-graded ring, let $(g_i)_{i\in I}$ be a family in $G$ such that there exists $i\in I$ with $g_i\notin\im(\ph)$, and let $R=A^{(\ph)}[(X_i)_{i\in I},(X_i^{-1})_{i\in I}]$ be the Laurent algebra over $A$ in the variables $(X_i)_{i\in I}$, furnished with the $G$-graduation given by $\deg(X_i)=g_i$ for $i\in I$. Then, $R\lph=0$.\smallskip

B) If $R$ is a positively $\Z$-graded ring, then $R_{(\!(0)\!)}=R_{(0)}$.\smallskip

C) Let $g\in G$, let $n$ be the order of the class of $g$ in $G/\im(\ph)$, and let $A$ be an $F$-graded ring. Let $R=A^{(\ph)}[X]$ be the polynomial algebra over $A^{(\ph)}$ in the variable $X$ in case $n=\infty$, and let $R=A^{(\ph)}[X]/\langle X^n\rangle$ be the quotient thereof by $\langle X^n\rangle$ in case $n<\infty$, furnished with the $G$-graduation such that the degree of (the class of) $X$ is $g$. Then, $R\lph=R_{(\ph)}$.\smallskip

D) If $R$ is an entire $G$-graded ring with $\im(\ph)\neq\degsupp(R)$ and $G/\im(\ph)$ is finite, then $R\lph\neq R_{(\ph)}$.
\end{exas}

\begin{prop}\label{adj30}
There is an adjoint triple $(\bullet\lph,\bullet^{(\ph)},\bullet_{(\ph)})$ with units\linebreak $\alpha_\ph\colon\Id_{\ann^G}(\bullet)\twoheadrightarrow(\bullet\lph)^{(\ph)}$ and\/ $\Id\colon\Id_{\ann^F}\rightarrow(\bullet^{(\ph)})_{(\ph)}$, and with counits\linebreak $\Id\colon(\bullet^{(\ph)})\lph\rightarrow\Id_{\ann^F}$ and $(\bullet_{(\ph)})^{(\ph)}\hookrightarrow\Id_{\ann^G}(\bullet)$.
\end{prop}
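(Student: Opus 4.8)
The plan is to establish the adjoint triple $(\bullet\lph,\bullet^{(\ph)},\bullet_{(\ph)})$ by proving two separate adjunctions, $\bullet\lph\dashv\bullet^{(\ph)}$ and $\bullet^{(\ph)}\dashv\bullet_{(\ph)}$, the latter being already known (\cite[1.2.1]{nvo2}). For each I would exhibit the claimed unit and counit and verify the two triangle identities, rather than constructing hom-set bijections directly; the natural transformations have all been assembled in part D) of the preceding \ref{adj10}-block, so the work is to check they compose correctly. Concretely, for the adjunction $\bullet^{(\ph)}\dashv\bullet_{(\ph)}$ the unit is the identity $\Id\colon\Id_{\ann^F}\to(\bullet^{(\ph)})_{(\ph)}$ (legitimate because $(S^{(\ph)})_{(\ph)}=S$ on the nose, as recorded in D)) and the counit is the natural monomorphism $(\bullet_{(\ph)})^{(\ph)}\hookrightarrow\Id_{\ann^G}$; for $\bullet\lph\dashv\bullet^{(\ph)}$ the unit is the natural surjection $\alpha_\ph\colon\Id_{\ann^G}\twoheadrightarrow(\bullet\lph)^{(\ph)}$ and the counit is the identity $\Id\colon(\bullet^{(\ph)})\lph\to\Id_{\ann^F}$ (again legitimate since $(S^{(\ph)})\lph=S$ by D)).

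First I would treat the easier, known adjunction. Given a $G$-graded ring $R$ and an $F$-graded ring $S$, a morphism $S^{(\ph)}\to R$ automatically lands its degree-$\ph(f)$ components in $R_{\ph(f)}$ and annihilates the components of degree outside $\im(\ph)$, so it corestricts to a morphism $S\to R_{(\ph)}$; conversely a morphism $S\to R_{(\ph)}$ extends to $S^{(\ph)}\to R$ via the counit inclusion $(R_{(\ph)})^{(\ph)}\hookrightarrow R$. The triangle identities here reduce to the observation that the composite $S^{(\ph)}\xrightarrow{\text{(unit)}^{(\ph)}}((S^{(\ph)})_{(\ph)})^{(\ph)}\xrightarrow{\text{counit}}S^{(\ph)}$ is the identity, which is immediate because the unit is literally $\Id_S$ under the identification $(S^{(\ph)})_{(\ph)}=S$, and symmetrically for the other triangle.

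Next I would handle $\bullet\lph\dashv\bullet^{(\ph)}$, which is the genuinely new content. The key point is the universal property of the surjection $\alpha_\ph(R)\colon R\twoheadrightarrow(R\lph)^{(\ph)}$: any morphism of $G$-graded rings $R\to S^{(\ph)}$ must kill $\ia_\ph(R)$, since $S^{(\ph)}$ vanishes in all degrees outside $\im(\ph)$ and $\ia_\ph(R)$ is the graded ideal generated by the components of $R$ in those degrees, hence it factors uniquely through $R/\ia_\ph(R)$ and therefore, after restricting degrees, through $(R\lph)^{(\ph)}$. This gives the bijection $\hm{\ann^F}{R\lph}{S}\cong\hm{\ann^G}{R}{S^{(\ph)}}$, and I would verify naturality in both variables and then read off that $\alpha_\ph$ is the unit and $\Id\colon(\bullet^{(\ph)})\lph\to\Id_{\ann^F}$ the counit by checking the two triangle identities.

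The main obstacle I anticipate is bookkeeping in the triangle identity involving $\alpha_\ph$: one must confirm that applying $\bullet\lph$ to the unit $\alpha_\ph(R)$ and then composing with the counit recovers the identity of $R\lph$, which hinges on $\alpha_\ph(R)$ becoming an isomorphism after passing to $\bullet\lph$ (equivalently, that $((R\lph)^{(\ph)})\lph=R\lph$ and that $(\alpha_\ph(R))\lph$ is the identity under this identification). This is where the precise description of $\ia_\ph$ and of the components $((R\lph)^{(\ph)})_g$ given in D) is essential: for $g\in\im(\ph)$ the map $\alpha_\ph(R)_g$ is the projection $R_g\twoheadrightarrow R_g/\sum_{h\in G\setminus\im(\ph)}R_hR_{g-h}$, and one checks that this kernel is exactly the degree-$g$ component of $\ia_\ph(R)$, so that $\bullet\lph$ sees no further collapse. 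Once this identification is in place the triangle identities follow formally, and the adjoint triple is established.
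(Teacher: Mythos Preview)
Your proposal is correct and is precisely the straightforward verification the paper has in mind; the paper's own proof consists of the single word ``Straightforward,'' so you have supplied exactly the details the author chose to omit. One minor remark: the natural transformations you invoke are set up in part D) of the unnumbered paragraph \emph{preceding} \ref{adj10}, not in \ref{adj10} itself, so adjust the cross-reference accordingly.
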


\begin{proof}
Straightforward.
\end{proof}

\begin{no}\label{adj40}
If $(F,G,H)$ is an adjoint triple, then the following statements are equivalent (\cite[1.5.6, Exercise 1.14]{ks}): (i) $F$ is fully faithful; (ii) the unit of $(F,G)$ is an isomorphism; (iii) $H$ is fully faithful; (iv) the counit of $(G,H)$ is an isomorphism. Moreover, if they hold then $G$ is conservative.
\end{no}

\begin{prop}\label{adj50}
If $\ph$ is not an isomorphism, then $\bullet\lph$ has no left adjoint and $\bullet_{(\ph)}$ has no right adjoint.
\end{prop}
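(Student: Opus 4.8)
The plan is to reduce everything to the conservativity statement recorded in \ref{adj40}. The starting observation is that $\bullet^{(\ph)}$ is fully faithful: by \ref{adj30} the counit of the adjunction $(\bullet\lph,\bullet^{(\ph)})$ and the unit of the adjunction $(\bullet^{(\ph)},\bullet_{(\ph)})$ are both the identity, and a counit (resp.\ unit) being an isomorphism forces the right (resp.\ left) adjoint $\bullet^{(\ph)}$ to be fully faithful. The key point is then purely formal: if $\bullet\lph$ had a left adjoint $E$, then since $\bullet\lph\dashv\bullet^{(\ph)}$ we would obtain an adjoint triple $(E,\bullet\lph,\bullet^{(\ph)})$ whose rightmost functor $\bullet^{(\ph)}$ is fully faithful, so by \ref{adj40} the middle functor $\bullet\lph$ would be conservative; dually, if $\bullet_{(\ph)}$ had a right adjoint $T$, then since $\bullet^{(\ph)}\dashv\bullet_{(\ph)}$ we would obtain an adjoint triple $(\bullet^{(\ph)},\bullet_{(\ph)},T)$ whose leftmost functor $\bullet^{(\ph)}$ is fully faithful, so again by \ref{adj40} the middle functor $\bullet_{(\ph)}$ would be conservative. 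It therefore suffices to prove that neither $\bullet\lph$ nor $\bullet_{(\ph)}$ is conservative as soon as $\ph$ fails to be an isomorphism.

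Since $\ph$ is a monomorphism that is not an isomorphism, it is not surjective, so I may pick $g_0\in G\setminus\im(\ph)$. For the witnesses I would use the group algebra $R\dfgl\Z[G]$ with its standard $G$-graduation $R_g=\Z e_g$, in which each $e_g$ is a homogeneous unit of degree $g$. To see that $\bullet\lph$ is not conservative, note that $e_{g_0}\in R^*$ has degree $g_0\notin\im(\ph)$, so $R\lph=0$ by \ref{adj10}a). Hence the unit $\alpha_\ph(R)\colon R\to(R\lph)^{(\ph)}=0$ is not an isomorphism (because $R\neq0$), whereas $\bullet\lph(\alpha_\ph(R))$ is an endomorphism of the zero ring $R\lph=0$ and is therefore an isomorphism. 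Thus $\bullet\lph$ sends a non-isomorphism to an isomorphism.

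For $\bullet_{(\ph)}$ I would use the same $R$ together with the counit $(R_{(\ph)})^{(\ph)}\hookrightarrow R$ of $(\bullet^{(\ph)},\bullet_{(\ph)})$ furnished by \ref{adj30}. This inclusion is not an isomorphism, since its component in degree $g_0$ is $0$ while $R_{g_0}=\Z e_{g_0}\neq0$. On the other hand $\bullet_{(\ph)}$ retains exactly the components in degrees lying in $\im(\ph)$, on which the inclusion restricts to the identity, so $\bullet_{(\ph)}$ sends this monomorphism to $\Id_{R_{(\ph)}}$ (which also follows at once from the triangle identities). Hence $\bullet_{(\ph)}$ too fails to be conservative, and both assertions follow.

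The only delicate point I anticipate is the bookkeeping in the first paragraph: one must correctly identify, in each of the two hypothetical adjoint strings, which outer functor is the one already known to be fully faithful, so that \ref{adj40} delivers conservativity of the correct middle functor. In both cases this functor is $\bullet^{(\ph)}$ (rightmost in the first triple, leftmost in the second), which is why the criterion applies. Once this is arranged, producing the explicit non-conservativity witnesses through the group algebra and \ref{adj10} is entirely routine.
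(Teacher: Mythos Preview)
Your argument is correct. For $\bullet\lph$ it coincides with the paper's: both observe that the counit of $(\bullet\lph,\bullet^{(\ph)})$ is an isomorphism, so a hypothetical left adjoint would make $\bullet\lph$ the middle term of an adjoint triple satisfying the hypotheses of \ref{adj40}, hence conservative, and both refute conservativity by exhibiting a nonzero $G$-graded ring with vanishing $\ph$-corestriction (you use $\Z[G]$ and \ref{adj10}~a); the paper invokes the Laurent algebra of \ref{adj20}~A)).

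For $\bullet_{(\ph)}$, however, you take a genuinely different route. The paper argues directly that $\bullet_{(\ph)}$ fails to preserve coproducts: with $R=\Z[X,X^{-1}]$, $\deg(X)=g$, one has $R_{(\ph)}=\Z^{(F)}$, while $(R\otimes_{\Z}R)_{(\ph)}$ contains the element $XY^{-1}$ of degree $0$ and hence is strictly larger than $R_{(\ph)}\otimes_{\Z}R_{(\ph)}=\Z^{(F)}$; since a left adjoint must preserve colimits, $\bullet_{(\ph)}$ can have no right adjoint. You instead reuse \ref{adj40} symmetrically: a right adjoint $T$ would yield an adjoint triple $(\bullet^{(\ph)},\bullet_{(\ph)},T)$ whose leftmost term is fully faithful, forcing $\bullet_{(\ph)}$ to be conservative, which the counit $(R_{(\ph)})^{(\ph)}\hookrightarrow R$ for $R=\Z[G]$ visibly contradicts. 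Your approach has the advantage of treating both halves of the proposition by exactly the same mechanism, and of staying entirely inside the framework already established in \ref{adj30}--\ref{adj40}; the paper's coproduct argument avoids invoking the hypothetical triple but requires identifying the coproduct in $\ann^G$ and computing a graded tensor product explicitly.
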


\begin{proof}
If $\ph$ is not an isomorphism, then there exists $g\in G\setminus\im(\ph)$. So, $\bullet\lph$ maps non-isomorphic $G$-graded rings to the zero ring (\ref{adj20} A)) and thus is not conservative. Moreover, the counit of $(\bullet\lph,\bullet^{(\ph)})$ is an isomorphism (\ref{adj30}), hence $\bullet\lph$ has no left adjoint (\ref{adj40}). Furthermore, identifying $R\otimes_{\Z}R$ with $S$ we get $XY^{-1}\in R_g\otimes_{\Z}R_{-g}\subseteq(R\otimes_{\Z}R)_{(\ph)}$, and as $R_{(\ph)}=\Z^{(F)}$ we get $XY^{-1}\notin\Z^{(F)}=R_{(\ph)}\otimes_{\Z}R_{(\ph)}$. So, $\bullet_{(\ph)}$ does not commute with tensor products over $\Z$. As these are coproducts in $\ann^G$ and $\ann^F$ it follows that $\bullet\lph$ has no right adjoint.
\end{proof}


\section{Simplicity, entirety, and reducedness}

\begin{no}\label{ser10}
A) Let $g\in G$, and let $M$ be a $G$-graded $R$-module. We denote by $M(g)$ the $g$-shift of $M$, i.e., the $G$-graded $R$-module whose underlying $R_{[0]}$-module is the $R_{[0]}$-module underlying $M$ and whose $G$-graduation is given by $M(g)_h=M_{g+h}$ for $h\in G$.\smallskip

B) If $g\in G$ and $x\in R_g$, then multiplication by $x$ defines a morphism of $G$-graded $R$-modules $m_x\colon R\rightarrow R(g),\;y\mapsto xy$. (As we are only interested in $m_x$ being an iso- or a monomorphism, no problems will arise from the ambiguity of $m_0$.) An element $x\in R\hme$ is invertible if and only if $m_x$ is an isomorphism. The $G$-graded ring $R$ is called \textit{simple} if $R^*=R\hme\setminus 0$, i.e., if $R\neq 0$ and every nonzero homogeneous element is invertible.\smallskip

C) An element $x\in R\hme$ is called \textit{regular} or a \textit{non-zerodivisor} if $m_x$ is a monomorphism, and a \textit{zerodivisor (of $R$)} otherwise. The latter holds if and only if there exists $y\in R\hme\setminus 0$ with $xy=0$. We denote by $\nzd(R)$ the multiplicative monoid of regular homogeneous elements of $R$ and by $\zd(R)$ the graded ideal of $R$ generated by all homogeneous zerodivisors of $R$. The $G$-graded ring $R$ is called \textit{entire} if $\nzd(R)=R\hme\setminus 0$, i.e., if $R\neq 0$ and every nonzero homogeneous element is regular. This holds if and only if $R\neq 0$ and $\zd(R)=0$.\smallskip

D) An element $x\in R\hme$ is called \textit{nilpotent} if there exists $p\in\N$ with $x^p=0$. We denote by $\nil(R)$ the graded ideal of $R$ generated by all nilpotent homogeneous elements of $R$. The $G$-graded ring $R$ is called \textit{reduced} if $\nil(R)=0$, i.e., if no nonzero homogeneous element is nilpotent. Clearly, $R$ is reduced if and only if $x^2\neq 0$ for every $x\in R\hme\setminus 0$.\smallskip

E) We have $\nil(R)\subseteq\zd(R)$ and $R^*\subseteq\nzd(R)\subseteq R\hme$. Hence, a simple $G$-graded ring is entire, and an entire $G$-graded ring is reduced. Furthermore, the $G$-graded zero ring is reduced, but not entire.
\end{no}

\begin{no}
A) If $\ia\subseteq R$ is a graded ideal and $\pi\colon R\twoheadrightarrow R/\ia$ is the canonical projection, then the graded ideal $\sqrt\ia\dfgl\pi^{-1}(\nil(R/\ia))=\langle x\in R\hme\mid\exists n\in\N\colon x^n\in\ia\rangle_R$ of $R$ is called \textit{the radical of $\ia$.} Clearly, $\nil(R)=\sqrt0$.\smallskip

B) A graded ideal $\ia\subseteq R$ is called \textit{maximal,} \textit{prime,} or \textit{perfect,} if the $G$-graded ring $R/\ia$ is simple, entire, or reduced, resp. So, $\ia$ is maximal if and only if it is $\subseteq$-maximal among all proper graded ideals of $R$, prime if and only if $R\hme\setminus\ip$ is multiplicatively closed, and perfect if and only if $\sqrt\ia=\ia$.\smallskip

C) The set of all prime graded ideals of $R$ is denoted by $\spec(R)$ and called \textit{the spectrum of $R$.} The set of all prime graded ideals of $R$ containing a graded ideal $\ia\subseteq R$ is denoted by $\var(\ia)$ and called \textit{the variety of $\ia$.}
\end{no}

\begin{prop}\label{ser60}
Let $S\subseteq R\hme$, and let $\ia\subseteq R$ be a graded ideal with $\ia\cap S=\emptyset$. The set of graded ideals of $R$ containing $\ia$ and not meeting $S$ has a maximal element; if $S$ is multiplicatively closed, then every such maximal element is prime.
\end{prop}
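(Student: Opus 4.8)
The plan is to run the graded analogue of Krull's classical argument. For the first assertion I would let $\Sigma$ denote the set of all graded ideals of $R$ that contain $\ia$ and are disjoint from $S$, partially ordered by inclusion; it is nonempty since $\ia\in\Sigma$. The only point worth a remark is that the union of a chain in $\Sigma$ is again a member of $\Sigma$: a union of an increasing chain of ideals is an ideal, it is graded because any element of the union already lies in one ideal of the chain and hence has all its homogeneous components there, it contains $\ia$, and it avoids $S$ since each ideal of the chain does. Thus every chain has an upper bound, and Zorn's Lemma yields a maximal element.

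For the second assertion, suppose $S$ is multiplicatively closed, so that $1\in S$, and let $\ip$ be a maximal element of $\Sigma$. Since $\ip\cap S=\emptyset$ we have $1\notin\ip$, so $\ip$ is proper. By the characterisation of primeness recalled above it suffices to show that $R\hme\setminus\ip$ is multiplicatively closed; as $1\in R\hme\setminus\ip$, I only need closure under products. Assume for contradiction that there are $x,y\in R\hme\setminus\ip$ with $xy\in\ip$. The graded ideals $\ip+\langle x\rangle_R$ and $\ip+\langle y\rangle_R$ then contain $\ip$ strictly (as $x,y\notin\ip$) and both contain $\ia$; by maximality of $\ip$ in $\Sigma$ neither lies in $\Sigma$, so each of them meets $S$.

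Hence there are $s,t\in S$ with $s\in\ip+\langle x\rangle_R$ and $t\in\ip+\langle y\rangle_R$, say $s=p+rx$ and $t=q+r'y$ with $p,q\in\ip$ and $r,r'\in R$. Expanding $st=pq+pr'y+rxq+rr'xy$, the first three summands lie in $\ip$ because $p,q\in\ip$, and the last lies in $\ip$ because $xy\in\ip$; therefore $st\in\ip$. But $st\in S$ since $S$ is multiplicatively closed, contradicting $\ip\cap S=\emptyset$. So $R\hme\setminus\ip$ is multiplicatively closed and $\ip$ is prime. The argument is entirely routine; the only things to watch are that the chain union stays graded and that the convention gives $1\in S$, which is what forces the maximal element to be a proper, hence genuinely prime, graded ideal.
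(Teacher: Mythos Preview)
Your proof is correct and follows essentially the same route as the paper: Zorn's Lemma for existence, then Krull's argument (pass to $\ip+\langle x\rangle_R$ and $\ip+\langle y\rangle_R$, pick elements of $S$ therein, multiply, and find a contradiction with $\ip\cap S=\emptyset$). The only cosmetic difference is that the paper argues directly rather than by contradiction and therefore takes the trouble to choose \emph{homogeneous} $u\in\ip$, $a\in R$ with $u+ax\in S$ (which one can do by taking homogeneous components, since $s\in S$ is homogeneous and $\ip$ is graded); your contradiction version does not need this refinement, since once $xy\in\ip$ is assumed the product $st$ lands in $\ip$ regardless of whether $p,q,r,r'$ are homogeneous.
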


\begin{proof}
The set $\mathcal{I}$ of graded ideals of $R$ containing $\ia$ and not meeting $S$, ordered by $\subseteq$, is nonempty and inductive, hence has a maximal element $\ip$ by Zorn's Lemma. Suppose that $S$ is multiplicatively closed. Then, $S\neq\emptyset$ and hence $\ip\neq R$. Let $x,y\in R\hme\setminus\ip$. Then, $\ip+\langle x\rangle_R,\ip+\langle y\rangle_R\notin\mathcal{I}$, but both these graded ideals contain $\ia$. So, there exist $u,v\in\ip\hme$ and $a,b\in R\hme$ with $u+ax,v+by\in S$. This implies that $uv+uyb+vxa+xyab=(u+ax)(v+by)\in S\subseteq R\hme\setminus\ip$. As $uv+uyb+vxa\in\ip$ it follows that $xyab\notin\ip$, hence $xy\notin\ip$. Thus, $\ip$ is prime.
\end{proof}

\begin{cor}\label{ser70}
a) If $\ia\subseteq R$ is a graded ideal, then $\sqrt\ia=\bigcap\var(\ia)$.

b) $\nil(R)=\bigcap\spec(R)$.
\end{cor}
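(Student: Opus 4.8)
The plan is to derive b) as the special case $\ia=0$ of a) and to prove a) by the usual two-inclusion argument, with the one genuinely nontrivial direction supplied by Proposition \ref{ser60}. For b), recall that $\nil(R)=\sqrt0$ and that $\spec(R)=\var(0)$ (the prime graded ideals are precisely those containing the zero ideal); hence $\sqrt0=\bigcap\var(0)$ is literally the assertion $\nil(R)=\bigcap\spec(R)$. So I would concentrate entirely on a), and throughout I use the convention $\bigcap\emptyset=R$, which covers the degenerate case $\ia=R$, where $\var(\ia)=\emptyset$ and $\sqrt\ia=R$.

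For the inclusion $\sqrt\ia\subseteq\bigcap\var(\ia)$ I would argue on the homogeneous generators of $\sqrt\ia$. Let $x\in R\hme$ with $x^n\in\ia$ for some $n\in\N$, and let $\ip\in\var(\ia)$. Then $x^n\in\ia\subseteq\ip$; since $\ip$ is prime, $R\hme\setminus\ip$ is multiplicatively closed, so $x\notin\ip$ would force $x^n\notin\ip$, a contradiction. Hence $x\in\ip$. As this holds for every homogeneous generator and every $\ip\in\var(\ia)$, we get $\sqrt\ia\subseteq\ip$ for all such $\ip$, and therefore $\sqrt\ia\subseteq\bigcap\var(\ia)$.

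The reverse inclusion $\bigcap\var(\ia)\subseteq\sqrt\ia$ is the heart of the matter. Since both sides are graded ideals it suffices to treat homogeneous elements, so I would show that any $x\in R\hme$ with $x\notin\sqrt\ia$ is missed by some $\ip\in\var(\ia)$. Given such an $x$, consider the multiplicatively closed subset $S\dfgl\{x^n\mid n\in\N\}\subseteq R\hme$. Because $x\notin\sqrt\ia$, no power $x^n$ lies in $\ia$ (in particular $\ia\neq R$, since $\ia=R$ would give $\sqrt\ia=R\ni x$), so $\ia\cap S=\emptyset$. Proposition \ref{ser60} then furnishes a graded ideal $\ip\supseteq\ia$ that is maximal among those not meeting $S$, and, $S$ being multiplicatively closed, $\ip$ is prime. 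Thus $\ip\in\var(\ia)$, whereas $x=x^1\in S$ forces $x\notin\ip$; hence $x\notin\bigcap\var(\ia)$, as required.

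I expect the only real obstacle to be this reverse inclusion, and it is precisely what Proposition \ref{ser60} was designed to handle: the existence of a prime graded ideal containing $\ia$ and avoiding a prescribed multiplicatively closed set of homogeneous elements. Once that proposition is invoked the remaining steps are routine, and combining the two inclusions gives $\sqrt\ia=\bigcap\var(\ia)$, with b) following by specialising to $\ia=0$.
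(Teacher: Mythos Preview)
Your proposal is correct and follows essentially the same approach as the paper's proof: both directions of a) are argued on homogeneous elements, with the forward inclusion using primeness and the reverse inclusion obtained by applying Proposition~\ref{ser60} to the multiplicatively closed set $S=\{x^n\mid n\in\N\}$, and b) is the special case $\ia=0$. Your version is somewhat more explicit about the conventions (e.g., $\bigcap\emptyset=R$ and the case $\ia=R$), but the structure and key ideas coincide.
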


\begin{proof}
a) If $x\in\sqrt\ia\hme$ and $\ip\in\var(\ia)$, then there exists $p\in\N$ with $x^p\in\ia\subseteq\ip$, hence $x\in\ip$. This shows that $\sqrt\ia\subseteq\bigcap\var(\ia)$. Conversely, if $x\in R\hme\setminus\sqrt\ia$, then $S\dfgl\{x^p\mid p\in\N\}\subseteq R$ is multiplicatively closed, hence by \ref{ser60} there exists $\ip\in\var(\ia)$ with $\ip\cap S=\emptyset$, implying $x\notin\ip$ and therefore $x\notin\bigcap\var(\ia)$. This shows that $\bigcap\var(\ia)\subseteq\sqrt\ia$. b) Apply a) with $\ia=0$.
\end{proof}

\begin{no}\label{ser99}
A) If $x\in R\hme$, then its property of being invertible, regular, or nilpotent depends only on $x$ and the underlying ring of $R$, but not on the graduation of $R$. So, as $R\hme\subseteq R\ps\hme$, we have $R^*\subseteq R\ps^*$, $\nzd(R)\subseteq\nzd(R\ps)$, $\zd(R)\ps\subseteq\zd(R\ps)$, and $\nil(R)\ps\subseteq\nil(R\ps)$. If $R$ is entire and $\ke(\psi)$ is torsionfree, then $R^*=R\ps^*$ by \cite[2.12 a)]{gic}.\smallskip

B) If $R\ps$ is simple, entire, or reduced, then so is $R$. Concerning the converse, we know from \cite[2.10, 2.12]{gic} that the functor $\bullet\ps$ preserves simplicity or entirety if and only if $\psi$ is an isomorphism or has a torsionfree kernel, resp. (cf. \ref{ser110}).\smallskip

C) Let $\ia\subseteq R$ be a graded ideal. Then, $\sqrt\ia\ps\subseteq\sqrt{\ia\ps}$. Moreover, if $\ia\ps$ is maximal, prime, or perfect, then so is $\ia$.
\end{no}

\begin{prop}\label{simp10}
If $R$ is simple and $R\hme=R\ps\hme$, then $R\ps$ is simple. The converse holds if\/ $\ke(\psi)$ is torsionfree.
\end{prop}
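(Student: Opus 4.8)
The plan is to handle the two implications separately, in each case reading off the conclusion from the behaviour of the group of homogeneous units $R^*$ under coarsening recorded in \ref{ser99}, together with the description of simplicity in \ref{ser10} B).

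For the first implication I would recall from \ref{ser99} A) that the inclusion $R^*\subseteq R\ps^*$ always holds, since invertibility of a homogeneous element depends only on the underlying ring. Assuming $R$ simple gives $R^*=R\hme\setminus 0$ (and $R\neq 0$, so $R\ps\neq 0$, as the underlying ring is the same). The hypothesis $R\hme=R\ps\hme$ then yields the chain $R\ps\hme\setminus 0=R\hme\setminus 0=R^*\subseteq R\ps^*\subseteq R\ps\hme\setminus 0$, the last inclusion holding because units are nonzero and homogeneous. Hence $R\ps^*=R\ps\hme\setminus 0$, i.e. $R\ps$ is simple. This direction uses no hypothesis on $\ke(\psi)$.

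For the converse, assume $\ke(\psi)$ torsionfree and $R\ps$ simple. First, $R$ is simple by \ref{ser99} B), hence entire by \ref{ser10} E). Now the decisive input is the second half of \ref{ser99} A): for entire $R$ with torsionfree $\ke(\psi)$ one has $R^*=R\ps^*$. Combining this with the simplicity descriptions $R^*=R\hme\setminus 0$ and $R\ps^*=R\ps\hme\setminus 0$ gives $R\hme\setminus 0=R\ps\hme\setminus 0$, whence $R\hme=R\ps\hme$.

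I expect the genuine content to be buried inside the cited equality $R^*=R\ps^*$, and it is worth isolating what must happen here. Since $R$ is simple, $\degsupp(R)$ is a subgroup of $G$, the inclusion $R\hme\subseteq R\ps\hme$ is automatic, and a strict inclusion would produce an $H$-homogeneous element with two distinct nonzero $G$-components; thus $R\hme=R\ps\hme$ is equivalent to $\ke(\psi)\cap\degsupp(R)=0$. The crux is therefore to exclude a homogeneous unit $x\in R_g$ of degree $g\in\ke(\psi)\setminus 0$, and this is exactly where torsionfreeness enters: $g$ has infinite order, so the powers $x^k$ occupy pairwise distinct degrees $kg\in\ke(\psi)$. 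Using that $(R\ps)_0$ is a field, being the degree-$0$ part of the simple graded ring $R\ps$, the element $1+x$ is invertible there; expanding its inverse $y=\sum_d y_d$ into $G$-homogeneous components and comparing degrees in $(1+x)y=1$ forces, off degree $0$, the recursion $y_{kg}=-x\,y_{(k-1)g}$. Whichever of $y_0$ or $y_{-g}$ is nonzero then propagates to infinitely many nonzero components $y_{kg}$, contradicting the finiteness of the support of $y$. This infinite-support argument is the step I expect to be the main obstacle; specialised to $H=0$ it recovers the classical fact that a field admits only the trivial grading by a torsionfree group.
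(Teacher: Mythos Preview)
Your argument is correct. For the converse you take a genuinely shorter route than the paper: you invoke the equality $R^*=R\ps^*$ recorded in \ref{ser99}~A) (valid since $R$ is entire and $\ke(\psi)$ is torsionfree) and read off $R\hme\setminus 0=R^*=R\ps^*=R\ps\hme\setminus 0$ directly. The paper does not use this black box but instead gives a self-contained argument that first shows $\ke(\psi)\cap\degsupp(R)=0$ and then deduces that each fibre $\psi^{-1}(h)$ meets $\degsupp(R)$ in at most one point. Both proofs hinge on the same trick --- invertibility of $1+x$ in $(R\ps)_0$ for $x\in R_g$ with $0\neq g\in\ke(\psi)$ --- but extract the contradiction differently: the paper equips $\ke(\psi)$ with a total ordering compatible with the group structure and compares the smallest and largest degree components of the inverse $y$, forcing $g_0=g+g_s$ and hence $g=0$; you instead use the recursion $y_d=-x\,y_{d-g}$ for $d\neq 0$ to propagate a nonzero component along the infinite cyclic subgroup $\langle g\rangle$, contradicting finite support. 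Your route is more economical given what the paper already cites; the paper's route is more self-contained and makes the role of the ordering explicit.
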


\begin{proof}
The first claim is clear. Suppose that $\ke(\psi)$ is torsionfree and that $R\ps$ is simple. Then, $R$ is simple and $\ke(\psi)$ can be furnished with a structure of totally ordered group (\cite[II.11.4 Lemme 1]{a}). Let $g\in\ke(\psi)\cap\degsupp(R)$. There exists $x\in R_g\setminus 0$. As $x^{-1}\in R_{-g}\setminus 0$ we may replace $x$ by $x^{-1}$ and thus suppose without loss of generality that $g\geq 0$. Now, $1+x\in(R\ps)_{0}$ is invertible, so there exists $y\in R$ with $1=(1+x)y$. Hence, there exist $s\in\N$, a strictly increasing sequence $(g_i)_{i=0}^s$ in $\ke(\psi)$, and $(y_{g_i})_{i=0}^s\in\prod_{i=0}^s(R_{g_i}\setminus 0)$ with $y=\sum_{i=0}^sy_{g_i}$. It follows that $1=(1+x)y=\sum_{i=0}^sy_{g_i}+\sum_{i=0}^sxy_{g_i}$ is homogeneous. On the right side, the component of smallest degree is $y_{g_0}\in R_{g_0}$, and the component of largest degree is $xy_{g_s}\in R_{g+g_s}$. This implies that $g_0=g+g_s$, hence $s=0$ and $g=0$, and therefore $\ke(\psi)\cap\degsupp(R)=0$. Next, let $g,g'\in G\setminus\ke(\psi)$ with $\psi(g)=\psi(g')$. There exist $x\in R_g\setminus 0$ and $y\in R_{g'}\setminus 0$. It follows that $y^{-1}\in R_{-g'}\setminus 0$, hence $xy^{-1}\in R_{g-g'}\setminus 0$, thus $g-g'\in\ke(\psi)\cap\degsupp(R)$, and therefore $g=g'$. So, for every $h\in H$ the cardinality of $\psi^{-1}(h)\cap\degsupp(R)$ is at most $1$, implying that $R\hme=R\ps\hme$.
\end{proof}

\begin{cor}
If $G$ is a torsionfree group and $R_{[0]}$ is a field, then $R$ is trivially $G$-graded.
\end{cor}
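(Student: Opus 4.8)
The plan is to deduce this directly from \ref{simp10} by specializing to the trivial coarsening. First I would apply \ref{simp10} with $\psi$ taken to be the unique epimorphism $G\twoheadrightarrow 0$ onto the zero group, so that $\ke(\psi)=G$. For this choice the coarsening $R\ps$ is exactly the $0$-graded ring $R_{[0]}$, i.e., the ring underlying $R$ furnished with the trivial graduation, in which \emph{every} element is homogeneous. Under this identification I want to reinterpret the two hypotheses of the corollary: since $G$ is torsionfree, the kernel $\ke(\psi)=G$ is torsionfree, so the converse direction of \ref{simp10} is available; and since $R_{[0]}$ is a field and all its elements are homogeneous, every nonzero homogeneous element of $R_{[0]}$ is invertible, which is precisely the statement that $R\ps=R_{[0]}$ is a simple $0$-graded ring in the sense of \ref{ser10} B).

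Granting these translations, the converse part of \ref{simp10} applies and yields $R\hme=R\ps\hme$. I would then finish by noting that $R\ps\hme=R$, because the trivial graduation on $R_{[0]}$ makes every element homogeneous. Hence $R\hme=R$, that is, every element of $R$ is homogeneous for the original $G$-graduation; by the characterization of trivially graded rings recorded in the introductory definitions (a $G$-graded ring is trivially graded if and only if every element is homogeneous), this means $R$ is trivially $G$-graded, as claimed.

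I do not expect a genuine obstacle here: the corollary is a clean specialization of \ref{simp10}, and the content lies entirely in that proposition. The only points demanding a little care are the two bookkeeping identifications with the trivial coarsening, namely that ``$R_{[0]}$ is a field'' is the same as ``$R\ps$ is simple'' and that $R\ps\hme$ equals all of $R$; both rest on the single observation that in a $0$-graded ring every element is homogeneous. As a sanity check one sees the torsionfreeness hypothesis is essential and is used in exactly the same place as in \ref{simp10}, so no additional assumptions on $G$ or $R$ enter.
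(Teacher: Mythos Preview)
Your proposal is correct and follows exactly the approach of the paper, which states only ``Immediately from \ref{simp10} with $H=0$.'' You have simply unpacked the two bookkeeping identifications (that $R_{[0]}$ being a field means $R\ps$ is simple, and that $R\ps\hme=R$) that are implicit in that one-line proof.
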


\begin{proof}
Immediately from \ref{simp10} with $H=0$.
\end{proof}

\begin{no}\label{ser100}
A) If $R\ps$ is simple and $\ke(\psi)$ is not torsionfree, then we may have $R\hme\subsetneqq R\ps\hme$, as illustrated by the following examples (taken from \cite[1.3.7--8]{nvo2}).\smallskip

B) Let $n\in\N_{>1}$, and let $K\subseteq L$ be a field extension such that there exists $x\in L$ algebraic over $K$ with $L=K(x)$ whose minimal polynomial has the form $X^n-a$ for some $a\in K$. Then, $(\langle x^\alpha\rangle_K)_{\alpha\in\Z/n\Z}$ is a $\Z/n\Z$-graduation on $L$. We furnish $L$ with this $\Z/n\Z$-graduation and denote the $\Z/n\Z$-graded ring thus obtained by $R$. Then, $R_{[0]}=L$ is simple, hence so is $R$. However, $R\hme\subsetneqq R_{[0]}\hme=L$, since, e.g., $x+1\notin R\hme$.\smallskip

C) Let $n\in\N_{>1}$, and let $K$ be a field. Applying B) with $x=X$ to the field extension $K(X^n)\subseteq K(X)$, where $K(X)$ is the field of rational fractions over $K$ in one indeterminate $X$, we get a $\Z/n\Z$-graded ring $R$ with $R_{[0]}=K(X)$ and $R\hme\subsetneqq R_{[0]}\hme$.\smallskip

D) Applying B) with $x=i$ to the field extension $\R\subseteq\mathbbm{C}$ we get a $\Z/2\Z$-graded ring $R$ with $R_{[0]}=\mathbbm{C}$ such that $R_{\overline{0}}=\R$ and $R_{\overline{1}}=\R i$. Clearly, $R\hme$ consists of the real and the purely imaginary numbers.
\end{no}

\begin{no}
A) For a monoid $M$ we denote by $\diff(M)$ and $M^*$ the groups of differences and of invertible elements of $M$. The monoid $M$ is cancellable if and only if the canonical morphism of monoids $M\rightarrow\diff(M)$ is a monomorphism. A cancellable monoid $M$ is torsionfree if and only if the group $\diff(M)$ is so. Furthermore, $M$ is called \textit{sharp} if $M^*=0$.\smallskip

B) Let $M$ be a cancellable monoid. Following \cite[2.5]{gic}, we denote by $R[M]$ \textit{the finely graded algebra of $M$ over $R$,} i.e., the algebra of $M$ over $R_{[0]}$, furnished with its canonical $G\oplus\diff(M)$-graduation. Denoting by $(e_m)_{m\in M}$ its canonical basis, we have $\deg(e_m)=(0,m)$ for $m\in M$. Furthermore, we denote by $R[M]_{[G]}$ \textit{the coarsely graded algebra of $M$ over $R$,} i.e., the $G$-graded $R$-algebra $R[M]_{[\pi]}$ where $\pi\colon G\oplus\diff(M)\twoheadrightarrow G$ is the canonical projection. Clearly, we have $\deg(e_m)=0$ for $m\in M$. More generally, if $d\colon M\rightarrow G$ is a morphism of monoids, then we denote by $R[M;d]$ \textit{the $d$-graded algebra of $M$ over $R$,} i.e., the $G$-graded $R$-algebra $R[M]_{[\delta]}$ where $\delta\colon G\oplus\diff(M)\twoheadrightarrow G$ is the epimorphism of groups induced by $d$ whose restriction to $G$ is $\Id_G$. Clearly, we have $\deg(e_m)=d(m)$ for $m\in M$.
\end{no}

\begin{lemma}\label{ser101}
Suppose that $\ke(\psi)$ is torsionfree.

a) Let $x,y\in R\ps\hme\setminus 0$ with $xy\in R\hme$. If $R$ is entire, then $x,y\in R\hme$ and $xy\neq 0$.

b) Let $x\in R\ps\hme\setminus 0$ and $p\in\N$ with $x^p\in R\hme$. If $R$ is reduced, then $x\in R\hme$ and $x^p\neq 0$.
\end{lemma}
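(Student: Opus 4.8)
The plan is to exploit a total order on $\ke(\psi)$ and argue with leading and trailing homogeneous components, much as in the proof of \ref{simp10}. Since $\ke(\psi)$ is torsionfree it can be furnished with a structure of totally ordered group (\cite[II.11.4 Lemme 1]{a}); as this order is translation invariant, it induces on each fibre $\psi^{-1}(h)=g_0+\ke(\psi)$ (for $h\in H$) a total order by declaring $g\preceq g'$ whenever $g'-g\geq 0$ in $\ke(\psi)$, independently of the chosen representative. The crucial formal property is compatibility with addition across fibres: if $g\preceq g'$ in $\psi^{-1}(h)$ and $k\preceq k'$ in $\psi^{-1}(h')$, then $g+k\preceq g'+k'$ in $\psi^{-1}(h+h')$, with strict inequality as soon as one of the two inequalities is strict. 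Now an element $z\in R\ps\hme\setminus 0$ has its nonzero $G$-homogeneous components supported in a single fibre, so with respect to $\preceq$ it possesses a well-defined largest and smallest degree; write $\lambda(z)$ and $\mu(z)$ for the corresponding (nonzero, $G$-homogeneous) leading and trailing components.

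For a), let $x,y\in R\ps\hme\setminus 0$ with $xy\in R\hme$ and $R$ entire. Grouping the product $xy$ by $G$-degree, the compatibility above shows that the component of largest degree is exactly $\lambda(x)\lambda(y)$, the top degree $\deg\lambda(x)+\deg\lambda(y)$ being attained by this pair only, and that of smallest degree is $\mu(x)\mu(y)$. Entirety gives $\lambda(x)\lambda(y)\neq 0$ and $\mu(x)\mu(y)\neq 0$, so $xy\neq 0$, and these are genuinely the extreme components of $xy$. But $xy\in R\hme\setminus 0$ is $G$-homogeneous, hence concentrated in a single degree, forcing $\deg\lambda(x)+\deg\lambda(y)=\deg\mu(x)+\deg\mu(y)$. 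Since $\deg\lambda(x)-\deg\mu(x)$ and $\deg\lambda(y)-\deg\mu(y)$ lie in $\ke(\psi)$ and are both $\geq 0$, their vanishing sum forces each to vanish; thus $x$ and $y$ are $G$-homogeneous, i.e. $x,y\in R\hme$.

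For b), let $x\in R\ps\hme\setminus 0$ and $p\geq 1$ with $x^p\in R\hme$ and $R$ reduced. Expanding $x^p$ and grouping by $G$-degree, the largest degree occurring is $p\deg\lambda(x)$, attained only by the single monomial $\lambda(x)^p$, and likewise the smallest is $p\deg\mu(x)$ with component $\mu(x)^p$. Here reducedness replaces entirety: a nonzero homogeneous element is not nilpotent, so $\lambda(x)^p\neq 0$ and $\mu(x)^p\neq 0$. As before $x^p\neq 0$ is $G$-homogeneous, so $p\deg\lambda(x)=p\deg\mu(x)$, i.e. $p(\deg\lambda(x)-\deg\mu(x))=0$ with $\deg\lambda(x)-\deg\mu(x)\in\ke(\psi)$ and $\geq 0$; torsionfreeness of $\ke(\psi)$ (equivalently, positivity in the ordered group) then yields $\deg\lambda(x)=\deg\mu(x)$, whence $x\in R\hme$.

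The main obstacle — and the point where torsionfreeness is indispensable — is the bookkeeping showing that the extreme-degree component of a product or power is precisely the product or power of the corresponding extreme components, and is nonzero: this rests on the strict order-compatibility across fibres, ensuring the extreme degree is attained by a unique combination of components, together with entirety in a) and the absence of nonzero homogeneous nilpotents in b). Once this is in place, the conclusion reduces to an elementary identity in the totally ordered group $\ke(\psi)$.
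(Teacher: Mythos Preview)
Your argument is correct and follows essentially the same route as the paper: a compatible total order on $\ke(\psi)$, identification of the extreme $G$-homogeneous components of a product or power as the product or power of the extremes, nonvanishing via entirety resp.\ reducedness, and then collapsing via the order. The only difference is organisational: for a) the paper simply cites \cite[2.9]{gic}, whereas you supply the parallel leading/trailing argument explicitly; for b) your proof and the paper's are the same in substance, the paper phrasing the uniqueness of the extreme degree via \cite[VI.1.1 Proposition 1]{a} and extending the order to all of $G$ via \cite[2.8]{gic}, while you work fibre by fibre.
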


\begin{proof}
a) holds by \cite[2.9]{gic}. b) By \cite[II.1.4 Lemme 1]{a} we can choose a total ordering on $\ke(\psi)$ that is compatible with its structure of group. Let $\leq$ denote its canonical extension to $G$ (\cite[2.8]{gic}). Let $h\dfgl\deg(x)\in H$. There exist a strictly increasing sequence $(g_i)_{i=0}^n$ in $\psi^{-1}(h)$ and $(x_i)_{i=0}^n\in\prod_{i=0}^n(R_{g_i}\setminus 0)$ such that $x=\sum_{i=0}^nx_i$. If $(k_j)_{j=1}^p$ is a sequence in $[0,n]$ with $\sum_{j=1}^pg_{k_j}=pg_n$, then $k_j=n$ for every $j\in[0,p]$ (\cite[VI.1.1 Proposition 1]{a}), hence the component of $x^p$ of degree $pg_n$ equals $x_n^p\neq 0$, and thus we have $x^p\neq 0$. As $x_0^p\neq 0$ and $x^p\in R\hme$ we have $pg_0=pg_n$. As we saw above this implies $n=0$ and thus the claim.
\end{proof}

\begin{thm}\label{ser110}
The following statements are equivalent: (i) $\ke(\psi)$ is torsionfree; (ii) $\bullet\ps$ respects entirety; (iii) $\bullet\ps$ maps simple $G$-graded rings to entire $H$-graded rings; (iv) $\bullet\ps$ respects reducedness; (v) $\bullet\ps$ maps simple $G$-graded rings to reduced $H$-graded rings.
\end{thm}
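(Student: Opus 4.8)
The plan is to prove the cyclic chain of implications
\[
\text{(i)}\Rightarrow\text{(ii)}\Rightarrow\text{(iii)}\Rightarrow\text{(v)}\Rightarrow\text{(i)}
\quad\text{together with}\quad
\text{(ii)}\Rightarrow\text{(iv)}\Rightarrow\text{(v)},
\]
exploiting the fact that the easy containments among simple, entire, and reduced graded rings (recorded in \ref{ser10} E)) already give most of the trivial arrows for free. First I would dispatch the heart of the positive direction: assuming (i), that $\ke(\psi)$ is torsionfree, I claim both (ii) and (iv) hold. For entirety this is exactly part a) of \ref{ser101}: if $R$ is entire and $x,y\in R\ps\hme\setminus 0$, I must check $xy\neq 0$; writing $xy$ in terms of its $R$-homogeneous components and invoking the total ordering on $\ke(\psi)$ (which exists by \cite[II.11.4 Lemme 1]{a} since a torsionfree group admits a compatible total order), the product of the top-degree components is nonzero by entirety of $R$, so $xy\neq 0$. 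The reducedness statement (iv) is the same argument applied to part b) of \ref{ser101} with $x=y$, examining the degree-$pg_n$ component of $x^p$.

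Next I would handle the cheap implications. For (ii)$\Rightarrow$(iii): a simple $G$-graded ring is in particular entire (\ref{ser10} E)), so if $\bullet\ps$ respects entirety it carries simple rings to entire rings. For (iii)$\Rightarrow$(v) and (iv)$\Rightarrow$(v): an entire graded ring is reduced (again \ref{ser10} E)), so either hypothesis forces simple rings to map to reduced rings. This leaves the single substantive converse, namely (v)$\Rightarrow$(i), which I expect to be the main obstacle and the only place where a genuine construction is needed.

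For (v)$\Rightarrow$(i) I would argue by contraposition: suppose $\ke(\psi)$ is \emph{not} torsionfree and produce a simple $G$-graded ring $R$ whose coarsening $R\ps$ is not reduced. Since $\ke(\psi)$ has torsion, there is an element of some finite order $n>1$ lying in $\ke(\psi)$; the examples in \ref{ser100} are the template here. Concretely, taking the class of such a torsion element as the degree of a variable $X$, I would build a simple $G$-graded ring along the lines of \ref{ser100} B)/C) — for instance a field graded by a cyclic group via a Kummer-type construction $K(X)$ over $K(X^n)$, or more directly an algebra $A^{(\ph)}[X]/\langle X^n\rangle$ from \ref{adj20} C) suitably graded — arranged so that $X$ and its powers are homogeneous and nonzero in $R$. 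The point is that while $R$ is simple (its nonzero homogeneous elements are invertible), after coarsening by $\psi$ the degree of $X^k$ collapses: some nonzero homogeneous element of $R\ps$ squares to zero, so $R\ps$ fails to be reduced. Verifying simplicity of the constructed $R$ and nilpotency of the relevant element in $R\ps$ is the technical core.

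The main obstacle, then, is pinning down a single clean example that simultaneously (a) is simple as a $G$-graded ring and (b) becomes non-reduced under $\bullet\ps$, valid for an \emph{arbitrary} epimorphism $\psi$ with torsion kernel rather than just the cyclic special cases of \ref{ser100}. I expect the construction to proceed by isolating a torsion element of $\ke(\psi)$ of prime order (reducing the general torsion case to a finite cyclic subquotient), so that the finite-cyclic examples already in hand apply essentially verbatim; the bookkeeping to transport those examples through the general $\psi$ is where the care lies.
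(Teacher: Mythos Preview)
Your chain of implications and the handling of the positive direction (i)$\Rightarrow$(ii),(iv) via \ref{ser101} are fine and match the paper. The gap is in (v)$\Rightarrow$(i): neither of your proposed examples works. The ring $A^{(\ph)}[X]/\langle X^n\rangle$ from \ref{adj20} C) is not simple---it is not even reduced, since $X$ is a nonzero homogeneous element with $X^n=0$. The Kummer-type examples in \ref{ser100} B)/C) are simple, but their coarsening $R_{[0]}$ is the field $L$ itself, which is certainly reduced; those examples illustrate only that $R\hme\subsetneqq R\ps\hme$, not that nilpotents appear. So you have not produced any simple $G$-graded ring whose $\psi$-coarsening fails to be reduced.

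The missing idea is a \emph{characteristic-$p$} construction. The paper argues as follows: for each prime $p$, take a field $K$ of characteristic $p$ and form the group algebra $K[\ke(\psi)]$, graded by $G$ via the inclusion $\ke(\psi)\hookrightarrow G$. This is simple (nonzero homogeneous elements are $ce_g$ with $c\in K^*$, hence units). Its $\psi$-coarsening is concentrated in degree $0$, so (v) forces the ungraded ring $K[\ke(\psi)]$ to be reduced; by Gilmer's criterion \cite[9.3]{gilmer} this means $\ke(\psi)$ is $p$-torsionfree. Since $p$ was arbitrary, $\ke(\psi)$ is torsionfree. Concretely, the nilpotent you were hoping to see is $1-e_g$ for $g\in\ke(\psi)$ of order $p$: in characteristic $p$ one has $(1-e_g)^p=1-e_{pg}=0$. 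Your examples were all in characteristic $0$, where group algebras of finite groups over fields are semisimple and no such nilpotent exists.
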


\begin{proof}
``(i)$\Leftrightarrow$(ii)$\Leftrightarrow$(iii)'' holds by \cite[2.12 b)]{gic}\footnote{The remaining claims were mentioned without proof in \loccit}. ``(i)$\Rightarrow$(iv)'' holds by \ref{ser101} b).\linebreak ``(iv)$\Rightarrow$(v)'' is clear. ``(v)$\Rightarrow$(i)'': Suppose that (v) holds, let $p$ be a prime number, and let $K$ be a field of characteristic $p$. Then, the $G$-graded ring $K[\ke(\psi)]^{(G)}$ is simple, hence the (ungraded) ring $K[\ke(\psi)]_{[0]}$ is reduced by (v), and thus $\ke(\psi)$ is $p$-torsionfree by \cite[9.3]{gilmer} (which says that given a prime number $p$, an entire ring $A$ of characteristic $p$ and a group $M$, the (ungraded) ring $A[M]$ is reduced if and only if $M$ is $p$-torsionfree). As this holds for every prime number $p$, it follows that $\ke(\psi)$ is torsionfree.
\end{proof}

\begin{prop}\label{ser120}
a) Let $M$ be a cancellable monoid. Then, $R$ is entire (or reduced) if and only if $R[M]$ is so.

b) Let $M$ be a torsionfree, cancellable monoid, and let $d\colon M\rightarrow G$ be a morphism of monoids. Then, $R[M;d]$ is entire (or reduced) if and only if $R$ is so.
\end{prop}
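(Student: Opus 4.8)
The plan is to prove a) by an explicit description of the homogeneous elements of $R[M]$, and then to obtain b) by exhibiting $R[M;d]$ as a coarsening of $R[M]$ with torsionfree kernel and invoking \ref{ser110}.

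For a), the point is that cancellability of $M$ determines the homogeneous elements of $R[M]$ completely. Since $M$ is cancellable the canonical morphism $M\to\diff(M)$ is a monomorphism, so in the $G\oplus\diff(M)$-graduation the component of degree $(g,n)$ is $R_g e_m$ if $n$ is the image of a (then unique) $m\in M$, and $0$ otherwise. Thus the nonzero homogeneous elements of $R[M]$ are exactly the monomials $r e_m$ with $r\in R\hme\setminus 0$ and $m\in M$. From $(r e_m)(r'e_{m'})=rr'e_{m+m'}$ and $(r e_m)^p=r^p e_{pm}$, together with the fact that the basis elements $e_{m+m'}$ and $e_{pm}$ are nonzero, I read off that $r e_m$ is a homogeneous zerodivisor (resp.\ nilpotent) of $R[M]$ if and only if $r$ is a homogeneous zerodivisor (resp.\ nilpotent) of $R$; for the nontrivial implications one multiplies by $r'e_0$. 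Since moreover $R[M]\neq 0$ if and only if $R\neq 0$, the two equivalences in a) follow immediately.

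For b), I would use that by definition $R[M;d]=R[M]_{[\delta]}$ is the $\delta$-coarsening of $R[M]$, where $\delta\colon G\oplus\diff(M)\twoheadrightarrow G$ is given by $\delta(g,n)=g+\bar d(n)$ and $\bar d\colon\diff(M)\to G$ is the morphism extending $d$. Then $\ke(\delta)=\{(-\bar d(n),n)\mid n\in\diff(M)\}$, and the second projection identifies it isomorphically with $\diff(M)$; since $M$ is torsionfree and cancellable, $\diff(M)$ and hence $\ke(\delta)$ is torsionfree. By \ref{ser110} (together with \ref{ser99} B) for the converse direction) the coarsening $\bullet_{[\delta]}$ then respects both entirety and reducedness, so $R[M]$ is entire (resp.\ reduced) if and only if $R[M;d]$ is so. Combining this with a) gives the assertion.

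The entire content sits in a), and the only step needing care there is the identification of the homogeneous elements, which is precisely where cancellability enters; I expect no real difficulty once one notes that products and powers of single monomials never create cancellation among distinct basis elements. In b) the single genuine task is the correct computation of $\ke(\delta)$ and its recognition as a copy of $\diff(M)$, after which \ref{ser110} does all the work. I would stress that torsionfreeness of $M$ is irrelevant for a) — the fine graduation already separates the monomials — but indispensable for b), since the coarsening $\delta$ merges degrees, and the torsionfree-kernel hypothesis of \ref{ser110} is exactly what preserves entirety and reducedness under this merging.
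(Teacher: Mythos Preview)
Your proof is correct. Part a) is argued exactly as in the paper: identify the nonzero homogeneous elements of $R[M]$ as the monomials $re_m$ with $r\in R\hme\setminus 0$ and $m\in M$, and read the claims off the formulas $(re_m)(r'e_{m'})=rr'e_{m+m'}$ and $(re_m)^p=r^pe_{pm}$.

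In b) both you and the paper reach the conclusion by applying \ref{ser110} to a coarsening of $R[M]$ with kernel isomorphic to $\diff(M)$. The paper first reduces to $d=0$ via the claimed inclusion $R[M;d]\hme\subseteq R[M]_{[G]}\hme$ and then works with the canonical projection $\pi$ (where $\ke(\pi)=\diff(M)$ is immediate), whereas you compute $\ke(\delta)=\{(-\bar d(n),n)\mid n\in\diff(M)\}\cong\diff(M)$ directly for arbitrary $d$. Your route is more direct, and it has the further virtue of bypassing the paper's inclusion claim, which in fact can fail: for $G=\Z$, $R=\Z[Y]$ with $\deg(Y)=1$, $M=\N$, and $d$ the inclusion $\N\hookrightarrow\Z$, the element $Y+e_1$ is homogeneous of degree $1$ in $R[M;d]$ (since $\deg(Y)=\deg(e_1)=1$ there) but not homogeneous in $R[M]_{[G]}$ (where $\deg(Y)=1$ and $\deg(e_1)=0$). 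So your direct kernel computation is not merely a stylistic variant --- it actually repairs a gap in the paper's reduction step.
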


\begin{proof}
a) If $R[M]$ is entire (or reduced), then so is its $G\oplus\diff(M)$-graded subring $R^{(G\oplus\diff(M))}$, and thus so is the $G$-graded ring $R$. Conversely, suppose first that $R$ is entire. Let $x,y\in R[M]\hme\setminus 0$. There exist $m,n\in M$ and $r,s\in R\hme\setminus 0$ with $x=re_m$ and $y=se_n$, so that $xy=rse_{m+n}$. As $R$ is entire, we have $rs\neq 0$, hence $xy\neq 0$. Therefore, $R[M]$ is entire. Suppose next that $R$ is reduced. Let $x\in R[M]\hme\setminus 0$ and $p\in\N$. There exist $m\in M$ and $r\in R\hme\setminus 0$ with $x=re_m$, so that $x^p=r^pe_{pm}$. As $R$ is reduced, we have $r^p\neq 0$, hence $x^p\neq 0$. Therefore, $R[M]$ is reduced.

b) If $R[M;d]$ is entire (or reduced), then so is its $G$-graded subring $R$. Conversely, suppose that $R$ is entire (or reduced). As $R[M;d]\hme\subseteq R[M]_{[G]}\hme$ we can without loss of generality suppose that $d=0$. So, it suffices to show that $R[M]_{[G]}$ is entire (or reduced). Denoting by $\delta\colon G\oplus\diff(M)\twoheadrightarrow G$ the canonical projection we have $R[M]_{[G]}=R[M]_{[\delta]}$. As $M$ is torsionfree, the same holds for $\ke(\delta)=\diff(M)$. Thus, \ref{ser110} and a) imply that $R[M]_{[G]}$ is entire (or reduced).
\end{proof}

\begin{prop}\label{ser135}
a) Let $M$ be a cancellable monoid. We consider the following statements: 
(i) $R[M]_{[G]}^*=R^*$; (ii) $R[M]^*=R^*$; (iii) $M$ is sharp. Then, we have (i)$\Rightarrow$(ii)$\Leftrightarrow$(iii), and if $R$ is entire and $M$ is torsionfree, then (i)--(iii) are equivalent.

b) Let $M$ be a sharp, torsionfree, cancellable monoid, let $d\colon M\rightarrow G$ be a morphism of monoids, and suppose that $R$ is reduced. Then, $R[M;d]^*=R^*$.
\end{prop}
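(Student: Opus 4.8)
The plan is to prove the two inclusions separately. The inclusion $R^*\subseteq R[M;d]^*$ is immediate: an element $r\in R^*$ is homogeneous in $R$ with a homogeneous inverse, and since $\deg(e_0)=d(0)=0$, the element $re_0$ is homogeneous in $R[M;d]$ with inverse $r^{-1}e_0$; thus $R$ embeds as a graded subring $Re_0$ and $R^*\subseteq R[M;d]^*$. The content is the reverse inclusion $R[M;d]^*\subseteq R^*$. Since units are by definition homogeneous, a unit $u\in R[M;d]^*$ of degree $h\in G$ has the form $u=\sum_{m\in M}s_me_m$ with $s_m\in R_{h-d(m)}$, and its inverse $v=\sum_{n\in M}t_ne_n$ has $t_n\in R_{-h-d(n)}$; I must show that $s_m=0$ for $m\neq 0$ and that $s_0\in R^*$.

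First I would settle the \emph{entire} case (parallel to the entire case of \ref{ser135} a)): if $A$ is an entire $G$-graded ring, then $A[M;d]^*=A^*$. As $\diff(M)$ is torsionfree it carries a total ordering compatible with its group structure, and $M\hookrightarrow\diff(M)$ because $M$ is cancellable. Writing $u,v$ as above, the coefficient of $e_{m_1+n_1}$ in $uv$, where $m_1$ and $n_1$ are the largest elements of the finite supports of $u$ and $v$, equals $s_{m_1}t_{n_1}$, since any contributing pair $(m,n)$ with $m+n=m_1+n_1$ satisfies $m\le m_1$, $n\le n_1$, forcing $m=m_1$, $n=n_1$. As $A$ is entire and $s_{m_1},t_{n_1}$ are nonzero homogeneous, this coefficient is nonzero; comparing with $uv=e_0$ gives $m_1+n_1=0$. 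The symmetric argument with the smallest support elements gives $m_0+n_0=0$, and monotonicity forces the supports to be singletons. Thus $u=s_{m_1}e_{m_1}$ with $m_1,-m_1\in M$; since $M$ is sharp, $m_1=0$, whence $u=s_0e_0\in A^*$.

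I would then reduce the reduced case to the entire one via the prime spectrum. For each $\ip\in\spec(R)$ the quotient $R/\ip$ is entire, and the canonical projection $R\twoheadrightarrow R/\ip$ induces a morphism of $G$-graded rings $R[M;d]\twoheadrightarrow(R/\ip)[M;d]$, $\sum_m a_me_m\mapsto\sum_m\overline{a_m}e_m$. Ring morphisms preserve invertibility and this one preserves homogeneity, so the image $\overline u$ lies in $(R/\ip)[M;d]^*=(R/\ip)^*$ by the entire case. Hence $\overline u=\overline{s_0}e_0$, which means $s_m\in\ip$ for every $m\neq 0$. As this holds for all $\ip\in\spec(R)$, we get $s_m\in\bigcap\spec(R)=\nil(R)=0$ for $m\neq 0$, by \ref{ser70} b) and reducedness of $R$ (\ref{ser10} D)); so $u=s_0e_0$. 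Finally, comparing the $e_0$-components of $uv=1$ yields $s_0t_0=1$ with $s_0\in R_h$ and $t_0\in R_{-h}$ homogeneous, so $s_0\in R^*$ and $u\in R^*$.

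The main obstacle is precisely that $R$ is only assumed reduced: the leading-coefficient argument of the entire case breaks down, because the product $s_{m_1}t_{n_1}$ of extreme coefficients may vanish when $R$ has homogeneous zerodivisors, so one cannot directly pin down the supports of $u$ and $v$. Passing to the entire quotients $R/\ip$ circumvents this, and reducedness re-enters exactly through the identity $\bigcap\spec(R)=0$, which lets the congruences modulo each $\ip$ be assembled into a global conclusion. Note that sharpness of $M$ is what kills the monoid degree ($m_1=0$), torsionfreeness together with cancellability provides the compatible ordering on $\diff(M)$, and reducedness provides the trivial nilradical, so that all four hypotheses are genuinely used.
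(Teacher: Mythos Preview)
Your proof of part b) is correct and follows essentially the same route as the paper: settle the entire case via a leading-term argument using a compatible total ordering on $\diff(M)$, then reduce the reduced case to entire quotients $R/\ip$ by means of $\bigcap_{\ip\in\spec(R)}\ip=\nil(R)=0$ (\ref{ser70} b)). The packaging differs only in minor ways. First, you work directly in $R[M;d]$, whereas the paper first reduces to $d=0$ and invokes part a) for $(R/\ip)[M]_{[G]}$; your choice is arguably cleaner, since the inclusion $R[M;d]^*\subseteq R[M]_{[G]}^*$ used in the paper's reduction is not entirely obvious a priori. Second, you intersect over all graded primes to kill each $s_m$ with $m\neq 0$, while the paper argues contrapositively, choosing for each nonzero coefficient $r_n$ a single prime not containing it and concluding $n=0$; these are logically equivalent uses of \ref{ser70} b). Note, however, that you do not address part a) separately: the implications (i)$\Rightarrow$(ii) and (ii)$\Leftrightarrow$(iii) require their own short arguments (as in the paper), and your leading-term computation supplies exactly the nontrivial direction (iii)$\Rightarrow$(i) under the extra hypotheses.
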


\begin{proof}
a) We have $R^*\subseteq R[M]^*\subseteq R[M]_{[G]}^*$, and hence (i) implies (ii). If $M$ is sharp and $x\in R[M]^*$, then there exist $r,s\in R\hme$ and $m,n\in M$ with $x=re_m$ and $rse_me_n=1=e_0$, implying $rs=1$ and $m+n=0$, hence $r\in R^*$ and $m=0$, and thus $x\in R^*$. Conversely, if $R[M]^*=R^*$ and $m\in M^*$, then $e_me_{-m}=1$, hence $e_m\in R[M]^*=R^*$, and therefore $m=0$. This shows that (ii) and (iii) are equivalent. If $R$ is entire and $M$ is torsionfree, then $R[M]^*=R[M]_{[G]}^*$ by \ref{ser99} A), and thus the equivalence of (ii) and (iii) follows from the above.

b) We have $R^*\subseteq R[M;d]^*$. As $R[M;d]^*\subseteq R[M]_{[G]}^*$, we can without loss of generality suppose that $d=0$. So, it suffices to show $R[M]_{[G]}^*\subseteq R^*$. Let $x\in R[M]_{[G]}^*$. There exists a family $(r_m)_{m\in M}$ of finite support in $R\hme$ with $x=\sum_{m\in M}r_me_m$. Let $n\in M$ with $r_n\neq 0$. Since $R$ is reduced, \ref{ser70} b) implies that there exists $\ip\in\spec(R)$ with $r_n\notin\ip$. We consider the canonical projection $p\colon R[M]_{[G]}\rightarrow(R/\ip)[M]_{[G]}$. Since $R/\ip$ is entire and $M$ is torsionfree and sharp, we have $p(x)\in(R/\ip)[M]_{[G]}^*=(R/\ip)^*$ by a).  As $p(x)=\sum_{m\in M}(r_m+\ip)e_m$, this implies that $r_m\in\ip$ for every $m\neq 0$. Therefore, $n=0$, and thus $x=r_0e_0\in R^*$ as desired.
\end{proof}


\section{Free graded modules and principal graded rings}

\begin{no}\label{ser85}
A) Let $M$ be a $G$-graded $R$-module. A subset $E\subseteq M\hme$ is called \textit{free} if whenever $(r_e)_{e\in E}$ is a family of finite support in $R\hme$ with $\sum_{e\in E}r_ee=0$, then $r_e=0$ for every $e\in E$.\smallskip

B) Suppose that $R$ is simple. If $E\subseteq M\hme$ is free and $x\in M\hme\setminus\langle E\rangle_R$, then $E\cup\{x\}$ is free. Indeed, let $r\in R\hme$ and let $(r_e)_{e\in E}$ be a family of finite support in $R\hme$ with $rx+\sum_{e\in E}r_ee=0$. If $r\neq 0$, then we get the contradiction that $x=-\sum_{e\in E}\frac{r_e}{r}e\in\langle E\rangle_R$. So, $r=0$, hence $\sum_{e\in E}r_ee=0$, and therefore $r_e=0$ for every $e\in E$.\smallskip

C) A subset $E\subseteq M\hme$ is called a \textit{basis} of $M$ if it is free and generates $M$. This holds if and only if whenever $N$ is a $G$-graded $R$-module and $f\colon E\rightarrow N$ is a map with $\deg(f(e))=\deg(e)$ for every $e\in E$, then $f$ can be extended uniquely to a morphism of $G$-graded $R$-modules $M\rightarrow N$. The $G$-graded $R$-module $M$ is called \textit{free} if it has a basis. This holds if and only if there exists an isomorphism of $G$-graded $R$-modules $M\cong\bigoplus_{g\in G}R(g)^{\oplus E_g}$ for some family of sets $(E_g)_{g\in G}$.\smallskip

D) A basis of $M$ is a basis of $M\ps$. Therefore, if $M$ is free, then so is $M\ps$. From the ungraded case it thus follows that if $M$ is free, then all bases of $M$ have the same cardinality. This common cardinality is denoted by $\rk_R(M)$ and called \textit{the rank of $M$.} (If $R=0$ we convene that $\rk_R(M)=0$.) Clearly, if $M$ is free, then $\rk_R(M)=\rk_{R\ps}(M\ps)$. Finally, as in the ungraded case it is clear that free $G$-graded $R$-modules are projective.
\end{no}

\begin{no}
A $G$-graded $R$-module is called \textit{monogeneous} if it has a set of homogeneous generators of cardinality $1$. The $G$-graded ring $R$ is called \textit{principal} if it is entire and every graded ideal is monogeneous.
\end{no}

\begin{prop}\label{ser90}
Suppose that $R$ is simple, let $M$ be a $G$-graded $R$-module, and let $E\subseteq F\subseteq M\hme$ be subsets such that $F$ generates $M$ and that $E$ is free. Then, there exists a basis $B$ of $F$ with $E\subseteq B\subseteq F$.
\end{prop}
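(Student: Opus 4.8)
The plan is to mimic the classical proof that a generating set of a vector space contains a basis extending a prescribed free subset, with the rôle of the field played by the simple graded ring $R$ and with linear independence replaced by the notion of freeness from \ref{ser85} A). Concretely, I would consider the set
\[
\mathcal{B}\dfgl\{B\mid E\subseteq B\subseteq F,\ B\text{ is free}\},
\]
ordered by inclusion. This set is nonempty, since $E\in\mathcal{B}$ by hypothesis. The strategy is to produce a maximal element of $\mathcal{B}$ by Zorn's Lemma and then to show that any maximal element is already a basis of $M$.

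First I would verify that $\mathcal{B}$ is inductively ordered. Let $(B_i)_{i\in I}$ be a nonempty chain in $\mathcal{B}$ and put $B\dfgl\bigcup_{i\in I}B_i$; clearly $E\subseteq B\subseteq F$, so it remains to check that $B$ is free. Here I would use that freeness is a finitary condition: if $(r_e)_{e\in B}$ is a family of finite support in $R\hme$ with $\sum_{e\in B}r_ee=0$, then only finitely many $e$ occur with $r_e\neq0$, and since $(B_i)_{i\in I}$ is a chain these finitely many elements all lie in a single $B_{i_0}$; freeness of $B_{i_0}$ then forces $r_e=0$ for all $e$. Thus $B\in\mathcal{B}$ is an upper bound of the chain, and Zorn's Lemma yields a maximal element $B\in\mathcal{B}$.

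It remains to show that such a maximal $B$ generates $M$; as it is free, it is then a basis. Since $F$ generates $M$, it suffices to prove $F\subseteq\langle B\rangle_R$, for then $\langle B\rangle_R=\langle F\rangle_R=M$. Suppose, for contradiction, that there exists $x\in F\setminus\langle B\rangle_R$. Then $x\in M\hme\setminus\langle B\rangle_R$ and $B$ is free, so by \ref{ser85} B) (which is where simplicity of $R$ enters decisively) the set $B\cup\{x\}$ is free. But $E\subseteq B\subseteq B\cup\{x\}\subseteq F$, so $B\cup\{x\}\in\mathcal{B}$, contradicting the maximality of $B$. Hence $F\subseteq\langle B\rangle_R$, and $B$ is the desired basis. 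The only step requiring care is the inductivity of $\mathcal{B}$, which rests on the finitary nature of freeness; the genuine algebraic input, namely that a free set can be enlarged by any homogeneous element outside its span, has already been isolated in \ref{ser85} B), so the remainder is the standard Zorn's Lemma machinery.
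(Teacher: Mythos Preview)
Your proof is correct and follows exactly the same route as the paper: form the poset of free subsets of $M\hme$ between $E$ and $F$, apply Zorn's Lemma, and invoke \ref{ser85} B) to conclude that a maximal element generates $M$. You spell out the inductivity and the generation argument in more detail than the paper does, but there is no substantive difference.
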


\begin{proof}
The set of free subsets of $M\hme$ containing $E$ and contained in $F$, ordered by $\subseteq$, is nonempty and inductive, hence has a maximal element $B$ by Zorn's Lemma. By \ref{ser85} B), $B$ generates $M$ and thus is the desired basis.
\end{proof}

\begin{cor}
If $R$ is simple, then every $G$-graded $R$-module is free.
\end{cor}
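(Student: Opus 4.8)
The plan is to read this off immediately from Proposition~\ref{ser90}, by taking the two hypotheses there to their extremes. Let $M$ be an arbitrary $G$-graded $R$-module. I would apply \ref{ser90} with $E\dfgl\emptyset$ and $F\dfgl M\hme$, the set of all homogeneous elements of $M$.

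Both hypotheses then hold for trivial reasons. The empty set is free in the sense of \ref{ser85} A), since the defining condition quantifies over the empty family and is thus vacuously satisfied. And $F=M\hme$ generates $M$: by the very definition of a $G$-graded module the additive group of $M$ is the direct sum of the $M_g$, so every element of $M$ is a finite sum of homogeneous elements and hence lies in $\langle M\hme\rangle_R$. Of course $E=\emptyset\subseteq F=M\hme$, so the inclusion hypothesis is met as well.

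Proposition~\ref{ser90} then furnishes a basis $B$ of $M$ with $\emptyset\subseteq B\subseteq M\hme$. In particular $M$ possesses a basis, so by \ref{ser85} C) it is free, as claimed.

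I would expect no genuine obstacle here: all the substance has already been carried out in \ref{ser90}, whose proof exploits simplicity of $R$ through the exchange-type property \ref{ser85} B). The only points calling for comment are the two hypothesis checks above, both of which are immediate. If one preferred to sidestep \ref{ser90}, one could argue directly by Zorn's Lemma that a maximal free subset of $M\hme$ exists and, again via \ref{ser85} B), that any such maximal free subset must generate $M$; but this merely reproduces the proof of \ref{ser90}, so the stated route via the extremal choice $E=\emptyset$, $F=M\hme$ is the cleaner one.
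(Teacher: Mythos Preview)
Your proposal is correct and matches the paper's proof exactly: the paper simply writes ``Apply \ref{ser90} with $E=\emptyset$ and $F=M\hme$.'' Your additional verification that $\emptyset$ is free and that $M\hme$ generates $M$ is sound but is left implicit in the paper.
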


\begin{proof}
Apply \ref{ser90} with $E=\emptyset$ and $F=M\hme$.
\end{proof}

\begin{prop}\label{a20}
If $\psi$ is not an isomorphism, then there exist a $G$-graded ring $R$ and a $G$-graded $R$-module $M$ such that $M$ is not free but $M\ps$ is free of rank $1$.
\end{prop}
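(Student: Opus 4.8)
The plan is to exploit the nonzero kernel of $\psi$: I grade a free module of rank one so that its degree support splits into two degrees lying in a single $\psi$-fibre, so that the splitting is visible before coarsening but is washed out afterwards.

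Since $\psi$ is an epimorphism that is not an isomorphism it is not injective, so $\ker(\psi)\neq 0$; fix $g\in\ker(\psi)\setminus 0$. First I would take $R$ to be the trivially $G$-graded ring whose underlying ring is $\Z\times\Z$. The point of this choice is that $R$ carries the nontrivial idempotent $(1,0)$, which is exactly what allows the free rank-one module $R$ to decompose. Then I would define $M$ to be the $G$-graded $R$-module whose underlying $R$-module is $R$ itself and whose graduation is $M_0=\Z\times 0$, $M_g=0\times\Z$, and $M_\gamma=0$ for $\gamma\in G\setminus\{0,g\}$. That this is a $G$-graded $R$-module is immediate: since $R$ is trivially graded, the only nontrivial condition is $R_0M_\gamma\subseteq M_\gamma$, and this holds because $M_0$ and $M_g$ are ideals of $R$.

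Next I would compute $M\ps$. As both $0$ and $g$ lie in $\ker(\psi)=\psi^{-1}(0)$, we obtain $(M\ps)_0=M_0\oplus M_g=R$ and $(M\ps)_h=0$ for $h\neq 0$; thus $M\ps$ is the trivially $H$-graded module underlying $R\ps$, so $1$ is a homogeneous basis and $M\ps$ is free of rank one.

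Finally, the crux is to show that $M$ itself is not free. Here I would argue by degree support. By \ref{ser85} the underlying $R$-module of $M$ is free of rank one, so if $M$ were free it would be free of rank one and hence isomorphic, as a $G$-graded $R$-module, to a shift $R(\gamma)$ for some $\gamma\in G$. Since $R$ is trivially graded, $R(\gamma)$ is concentrated in the single degree $-\gamma$, whereas $M$ has nonzero components in the two distinct degrees $0$ and $g$; as the degree support is preserved by isomorphisms of graded modules, this is a contradiction. (Equivalently, a homogeneous basis of $M$ would be a single homogeneous element generating $R$ freely, hence a unit of $\Z\times\Z$, but every homogeneous element of $M$ lies on one of the two axes and so is not a unit.) I expect this last step to be the main obstacle, in the sense that the whole construction hinges on balancing two competing requirements for $R$: it must fail to be entire so that the rank-one module $R$ genuinely splits, yet every homogeneous piece of $M$ must be free of units so that no shift $R(\gamma)$ can reproduce $M$.
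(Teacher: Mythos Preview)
Your construction is exactly the paper's: the same trivially $G$-graded ring $R=(\Z\times\Z)^{(G)}$ and the same graded module $M$ supported in degrees $0$ and $g\in\ker(\psi)\setminus 0$. Your argument that $M$ is not free (via degree support, or equivalently via the absence of homogeneous units) is a mild rephrasing of the paper's one-line observation that $M$ is not monogeneous; both come down to noting that every homogeneous element of $M$ lies on an axis of $\Z\times\Z$ and hence cannot generate it.
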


\begin{proof}
If $\psi$ is not an isomorphism, then there exists $f\in\ke(\psi)\setminus 0$. Let $R=(\Z\times\Z)^{(G)}$, and let $M$ be the $R$-module $\Z\times\Z$, furnished with the $G$-graduation given by $M_0=\Z\times 0$, $M_f=0\times\Z$, and $M_g=0$ for $g\in G\setminus\{0,f\}$. Then, $M\ps=R\ps$ is free of rank $1$. If $M$ is free, then $\rk_R(M)=1$ by \ref{ser85} D), but as $M$ is not monogeneous this is not possible. Thus, the claim is proven.
\end{proof}

\begin{prop}\label{a30}
Let $\P$ be a class of $G$-graded $R$-modules such that projective elements of $\P$ are free, and let $M\in\P$. Then, $M$ is free if and only if $M_{\ps}$ is so.
\end{prop}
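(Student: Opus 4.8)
The forward implication requires nothing beyond \ref{ser85} D): a basis of $M$ is a basis of $M\ps$, so freeness of $M$ immediately yields freeness of $M\ps$. The content is the converse, and the plan is to route it through projectivity. Since $M\ps$ is free it is projective over $R\ps$ (again \ref{ser85} D)), so if I can show that $\bullet\ps$ \emph{reflects} projectivity --- that $M\ps$ projective forces $M$ projective --- then $M$ is a projective element of $\P$, and the hypothesis on $\P$ delivers freeness of $M$. Thus the whole proposition reduces to the reflection of projectivity by $\bullet\ps$, combined with the two facts already available, namely that free graded modules are projective and that projectivity is converted back into freeness inside $\P$.

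To prove reflection I would fix an epimorphism $\pi\colon L\twoheadrightarrow M$ of $G$-graded $R$-modules with $L$ free (for instance the canonical one attached to $M\hme$) and apply $\bullet\ps$ to obtain an epimorphism $\pi\ps\colon L\ps\twoheadrightarrow M\ps$ with $L\ps$ free. Projectivity of $M\ps$ yields a section $\sigma\colon M\ps\to L\ps$ in $\md(R\ps)$ with $\pi\ps\circ\sigma=\Id$. The obstacle is that $\sigma$ respects only the coarse $H$-graduation, not the original $G$-graduation, so it is not a priori a morphism in $\md(R)$. The key step is to correct $\sigma$ by extracting its degree-preserving part: for homogeneous $m\in M_g$ one has $\sigma(m)\in(L\ps)_{\psi(g)}=\bigoplus_{k\in\ke(\psi)}L_{g+k}$, and I would define $\sigma_0(m)$ to be the $L_g$-component of $\sigma(m)$, extended additively to all of $M$.

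A short verification then shows that $\sigma_0\colon M\to L$ is a morphism of $G$-graded $R$-modules: degree-preservation is built into the definition, and $R$-linearity follows by comparing the degree-$(g+g')$ components of the identity $\sigma(rm)=r\sigma(m)$ for $r\in R_{g'}$ and $m\in M_g$. Moreover $\pi\circ\sigma_0=\Id_M$: since $\pi$ has degree $0$, applying $\pi$ to the $\ke(\psi)$-homogeneous decomposition of $\sigma(m)$ and comparing the component in degree $g$ of the equality $\pi\ps(\sigma(m))=m$ gives $\pi(\sigma_0(m))=m$. Hence $\pi$ splits in $\md(R)$, exhibiting $M$ as a direct summand of the free module $L$; in particular $M$ is projective.

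Assembling the pieces: $M\ps$ free $\Rightarrow$ $M\ps$ projective $\Rightarrow$ (by the reflection just established) $M$ projective $\Rightarrow$ (as $M\in\P$) $M$ free. I expect the only genuine obstacle to be the passage from the merely $R\ps$-linear section $\sigma$ to the $R$-linear, degree-$0$ section $\sigma_0$; everything else is routine bookkeeping with homogeneous components, and the finiteness of the support of each individual element guarantees that extracting a single degree component is well defined no matter how large $\ke(\psi)$ is.
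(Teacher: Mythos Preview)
Your proof is correct and follows exactly the same route as the paper: forward direction by \ref{ser85} D), and converse via $M\ps$ free $\Rightarrow$ $M\ps$ projective $\Rightarrow$ $M$ projective $\Rightarrow$ $M$ free. The only difference is that where the paper simply cites \cite[A.I.2.2]{nvo1} for the reflection of projectivity by $\bullet\ps$, you supply the standard explicit argument (extracting the degree-preserving component of a coarse section); your verification of $R$-linearity and of $\pi\circ\sigma_0=\Id_M$ is sound.
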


\begin{proof}
If $M_{\ps}$ is free, then it is projective, hence $M$ is projective by \cite[A.I.2.2]{nvo1}, and thus $M$ is free. The converse holds by \ref{ser85} D).
\end{proof}

\begin{prop}\label{a50}
Let $L$ be a free $G$-graded $R$-module, let $M\subseteq L$ be a graded sub-$R$-module, and suppose that graded ideals of $R$ are projective. Then, there exist a family $(\ia_e)_{e\in E}$ of graded ideals of $R$ and a family $(g_e)_{e\in E}$ in $G$ such that $\card(E)=\rk_R(L)$ and $M\cong\bigoplus_{e\in E}\ia_e(g_e)$.
\end{prop}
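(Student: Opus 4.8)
The plan is to establish a graded version of the classical structure theorem for submodules of free modules over hereditary rings (the ungraded statement—that over a ring all of whose ideals are projective every submodule of a free module is a direct sum of ideals—is due to Cartan--Eilenberg and Kaplansky). Here the hypothesis that graded ideals of $R$ are projective plays the role of the hereditary condition, and the whole argument rests on a transfinite induction along a well-ordered homogeneous basis, together with the purely graded observation that a submodule of a shift of $R$ is again a shifted graded ideal.

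First I would fix a homogeneous basis $(e_i)_{i\in I}$ of $L$, so that $\card(I)=\rk_R(L)$ by \ref{ser85} D), and well-order $I$. For $i\in I$ set $L_{\le i}\dfgl\bigoplus_{j\le i}Re_j$ and $L_{<i}\dfgl\bigoplus_{j<i}Re_j$, which are graded sub-$R$-modules of $L$, and put $M_{\le i}\dfgl M\cap L_{\le i}$ and $M_{<i}\dfgl M\cap L_{<i}$. Since $e_i$ is homogeneous of degree $g_i\dfgl\deg(e_i)$, the summand $Re_i$ is a free graded module isomorphic to $R(-g_i)$ via $r\mapsto re_i$, and the projection $\pi_i\colon L_{\le i}\twoheadrightarrow Re_i$ along $L_{<i}$ is a morphism of graded modules with kernel $L_{<i}$.

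Next I would analyse the restriction of $\pi_i$ to $M_{\le i}$. Its image $\pi_i(M_{\le i})$ is a graded sub-$R$-module of $Re_i\cong R(-g_i)$, hence of the form $\ia_i(-g_i)$ for a graded ideal $\ia_i$ of $R$, and its kernel is $M_{\le i}\cap L_{<i}=M_{<i}$. Thus there is an exact sequence $0\to M_{<i}\to M_{\le i}\xrightarrow{\pi_i}\ia_i(-g_i)\to 0$ of graded modules. By hypothesis $\ia_i$ is projective, hence so is its shift $\ia_i(-g_i)$, so this sequence splits; choosing a graded section $\theta_i$ of the surjection I set $C_i\dfgl\theta_i(\ia_i(-g_i))\cong\ia_i(-g_i)$, so that $M_{\le i}=M_{<i}\oplus C_i$.

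Finally I would assemble the decomposition by transfinite induction, proving $M_{\le i}=\bigoplus_{j\le i}C_j$ for every $i\in I$. Directness of the total family $(C_j)_j$ follows by looking at the largest index occurring in a hypothetical nontrivial relation and applying the corresponding $\pi$, which kills all strictly lower terms (they lie in the relevant kernel) and is inverted by the section on the top summand; at a given stage one then has $M_{<i}=\bigcup_{j<i}M_{\le j}=\bigoplus_{j<i}C_j$ by the induction hypothesis, and $M_{\le i}=M_{<i}\oplus C_i$ closes the induction, the case of limit indices being exactly this union. Since every element of $M$ has finite support it lies in some $L_{\le i}$, so $M=\bigcup_{i}M_{\le i}=\bigoplus_{i\in I}C_i\cong\bigoplus_{i\in I}\ia_i(-g_i)$; setting $E=I$ and $g_{e_i}=-g_i$ gives the claim, where some $\ia_i$ may vanish so that the identity $\card(E)=\rk_R(L)$ is preserved. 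The main obstacle is the bookkeeping in this transfinite induction—guaranteeing that the locally chosen complements $C_i$ fit into a genuine internal direct sum and that generation holds at limit ordinals—whereas the graded refinements (homogeneity of the basis, the identification of the image as a shifted graded ideal, and the gradedness of the splitting) are routine once the projectivity hypothesis is read in the category $\md(R)$.
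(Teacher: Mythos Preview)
Your proof is correct and follows essentially the same route as the paper's: choose a well-ordered homogeneous basis, filter $L$ by the initial segments $L_{\le i}$, project $M\cap L_{\le i}$ onto the $i$-th coordinate to obtain a shifted graded ideal, split using the projectivity hypothesis, and then verify by transfinite induction that the chosen complements generate $M$ and are independent via the ``largest index'' argument. The paper records exactly this argument (crediting the ungraded version in Bourbaki), with only cosmetic differences---it works with the global coordinate map $p_e\colon L\to R(g_e)$ rather than your local $\pi_i\colon L_{\le i}\to Re_i$, and its shift convention differs from yours by a sign, which is immaterial since the statement only asks for \emph{some} family $(g_e)$ in $G$.
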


\begin{proof}
Let $E$ be a homogeneous basis of $L$ and let $\leq$ be a well-ordering on $E$. For $e\in E$ we set $g_e\dfgl\deg(e)\in G$ and denote by $p_e\colon L\rightarrow R(g_e)$ the coordinate function of $e$ with respect to $E$ which is a morphism of $G$-graded $R$-modules. For $e\in E$ we consider the graded sub-$R$-module $L_e\dfgl\langle E_{\leq e}\rangle_R\subseteq L$, the graded sub-$R$-module $M_e\dfgl M\cap L_e\subseteq M\subseteq L$, the graded sub-$R$-module $p_e(M_e)\subseteq R(g_e)$, and the graded ideal $\ia_e\dfgl p_e(M_e)(-g_e)\subseteq R$. Then, $p_e$ induces by restriction and coastriction an epimorphism of $G$-graded $R$-modules $p_e'\colon M_e\twoheadrightarrow\ia_e(g_e)$. Since $\ia_e$ is projective, there exists a section $s_e\colon\ia_e(g_e)\rightarrow M_e$ of $p_e'$. We consider the graded sub-$R$-module $N_e\dfgl\im(s_e)\subseteq M_e$. As $N_e\cong\ia_e(g_e)$ for $e\in E$, it suffices to show that $M=\bigoplus_{e\in E}N_e$.

We show now that $M=\sum_{e\in E}N_e$. For $e\in E$ we consider the graded sub-$R$-module $M'_e\dfgl\sum_{f\in E_{\leq e}}N_f\subseteq M_e\subseteq M$. As $\sum_{e\in E}M'_e=\sum_{e\in E}N_e$, it suffices to show that $M=\sum_{e\in E}M'_e$. As $M=\sum_{e\in E}M_e$ it suffices to show that $M_e\subseteq M'_e$ for $e\in E$. This we do by induction on the well-ordered set $(E,\leq)$. Let $e\in E$, and suppose that $M_f\subseteq M'_f$ for every $f\in E_{<e}$. Let $x\in M_e$. Then, $p_e(x)\in\ia_e(g_e)$, hence there exists $y\in N_e$ with $x-y\in M\cap(\sum_{f<e}L_f)$, thus there exists $f\in E_{<e}$ with $x-y\in M\cap L_f=M_f\subseteq M'_f\subseteq M'_e$, and as $y\in M'_e$, we get $x\in M'_e$ as desired.

If the sum $\sum_{e\in E}N_e$ is not direct, then there exists $(a_e)_{e\in E}\in(\bigoplus_{e\in E}N_e)\setminus 0$ with $\sum_{e\in E}a_e=0$. Setting $f\dfgl\max\{e\in E\mid a_e\neq 0\}$ it follows $p_f(a_e)=0$ for $e\in E_{<f}$ and hence the contradiction $0\neq a_f=p_f(\sum_{e\in E}a_e)=p_f(0)=0$. Thus, the claim is proven.\footnote{This closely follows the proof of the ungraded variant in \cite[VII.3 Th\'eor\`eme 1]{a}, enhanced by some bookkeeping about degrees.}
\end{proof}

\begin{lemma}\label{a60}
Let $M$ be a $G$-graded $R$-module, let $L\subseteq M$ be a free graded sub-$R$-module, and let $n\in\N$. If $M$ has a homogeneous set of generators of cardinality $n$, then $\rk_R(L)\leq n$.
\end{lemma}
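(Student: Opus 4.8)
The plan is to reduce the assertion to the strong rank condition for ordinary commutative rings by a projectivity argument, after first clearing away the degenerate cases. If $R=0$, then $\rk_R(L)=0\leq n$ by convention; and if $n=0$, then $M$ is generated by the empty set, so $M=0$, whence $L=0$ and $\rk_R(L)=0$. Hence I may assume $R\neq 0$, and I argue by contradiction, supposing $\rk_R(L)>n$.

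First I would isolate a free graded submodule of rank precisely $n+1$. Let $B$ be a homogeneous basis of $L$. Since the rank of a free graded module is well defined (\ref{ser85} D)) and $\card(B)=\rk_R(L)>n$, I can choose $n+1$ pairwise distinct elements $e_0,\dots,e_n\in B$. A subset of a free set is free (\ref{ser85} A)), so $\{e_0,\dots,e_n\}$ is a basis of the graded sub-$R$-module $F\dfgl\langle e_0,\dots,e_n\rangle_R\subseteq L\subseteq M$, giving $\rk_R(F)=n+1$; moreover $F$ is projective, being free (\ref{ser85} D)).

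Next I would produce an injection from $F$ into a free module of rank $n$. Because $M$ has a homogeneous set of generators of cardinality $n$, the universal property of free graded modules (\ref{ser85} C)) furnishes an epimorphism of $G$-graded $R$-modules $\pi\colon P\twoheadrightarrow M$ with $P$ free of rank $n$ (take $P$ a direct sum of $n$ suitably shifted copies of $R$ and send its basis onto the given generators). Applying projectivity of $F$ to the inclusion $\lambda\colon F\hookrightarrow M$ yields a morphism of $G$-graded $R$-modules $\iota\colon F\rightarrow P$ with $\pi\circ\iota=\lambda$; since $\lambda$ is injective, so is $\iota$. Forgetting the graduation turns $\iota$ into an injective morphism between the underlying free $R_{[0]}$-modules, which have ranks $n+1$ and $n$ respectively.

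This is impossible: over the nonzero commutative ring $R_{[0]}$ there is no injective homomorphism from a free module of rank $n+1$ into one of rank $n$. The contradiction establishes $\rk_R(L)\leq n$. I expect this very last step to be the crux. In contrast to the invariance of rank, which the paper has already delegated to the ungraded setting, the strong rank condition for arbitrary commutative rings (McCoy's theorem) is not formal and ultimately rests on an annihilator-of-minors argument via the adjugate matrix; so the real decision is whether to cite this ungraded input or to interpolate its short inductive proof.
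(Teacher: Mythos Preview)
Your argument is correct, but it takes a longer route than the paper. The paper simply forgets the graduation at the outset: by \ref{ser85} D), $L_{[0]}\subseteq M_{[0]}$ is a free sub-$R_{[0]}$-module of the same rank, and $M_{[0]}$ is generated by $n$ elements, so the ungraded statement \cite[VII.3 Lemme 1]{a} (a free submodule of an $n$-generated module over a commutative ring has rank at most $n$) finishes immediately. Your approach instead lifts the inclusion $F\hookrightarrow M$ through a free cover $P\twoheadrightarrow M$ in the graded category, obtaining a graded monomorphism $F\rightarrowtail P$ between free modules of ranks $n+1$ and $n$, and only then forgets the graduation to invoke the strong rank condition. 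Both arguments rest on the same ungraded fact; your detour through projectivity essentially reproves in the graded setting the reduction step that is already packaged into Bourbaki's lemma, so the paper's proof is shorter while yours makes the mechanism a bit more explicit.
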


\begin{proof}
By \ref{ser85} D), $L_{[0]}\subseteq M_{[0]}$ is a free sub-$R_{[0]}$-module, and $M_{[0]}$ has a set of generators of cardinality $n$. Thus, \ref{ser85} D) and \cite[VII.3 Lemme 1]{a} imply that $\rk_R(L)=\rk_{R_{[0]}}(L_{[0]})\leq n$ as claimed.
\end{proof}

\begin{thm}\label{a80}
The following statements are equivalent:
\begin{mylist}{3.9}
\item[(i)] $R$ is principal or $R=0$;
\item[(ii)] Every graded ideal of $R$ is free;
\item[(iii)] Every graded ideal of $R$ is free of rank at most $1$;
\item[(iv)] Graded sub-$R$-modules of free $G$-graded $R$-modules are free;
\item[(v)] Graded sub-$R$-modules of free $G$-graded $R$-modules of finite type are free.
\end{mylist}
\end{thm}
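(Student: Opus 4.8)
The plan is to prove the cycle $(i)\Rightarrow(iv)\Rightarrow(v)\Rightarrow(ii)\Rightarrow(iii)\Rightarrow(i)$, which makes all five statements equivalent. Two of the links are immediate: $(iv)\Rightarrow(v)$ holds because free $G$-graded $R$-modules of finite type are in particular free, and $(v)\Rightarrow(ii)$ holds because $R$ itself is a free $G$-graded $R$-module of finite type (with basis $\{1\}$), so each graded ideal, being a graded sub-$R$-module of $R$, is free by $(v)$.

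For $(ii)\Rightarrow(iii)$ I would exploit commutativity of $R$. If some graded ideal were free of rank at least $2$, its basis would contain two distinct homogeneous elements $e_1,e_2$, both nonzero since a free set cannot contain $0$; then $e_2\cdot e_1-e_1\cdot e_2=0$ is a linear relation with homogeneous coefficients $e_2$ and $-e_1$, not all zero, contradicting freeness (\ref{ser85} A)). Hence every free graded ideal has rank at most $1$, and $(iii)$ follows from $(ii)$.

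The implication $(iii)\Rightarrow(i)$ carries the main new idea. If $R=0$ then $(i)$ holds, so assume $R\neq 0$. By $(iii)$ every graded ideal is either $0$ or free of rank $1$, hence generated by a single homogeneous element, so all graded ideals are monogeneous. To see that $R$ is entire, suppose some $x\in R\hme\setminus 0$ is a zerodivisor, say $xy=0$ with $y\in R\hme\setminus 0$. Then $\langle x\rangle_R$ is nonzero, hence free of rank $1$ by $(iii)$, with a homogeneous basis $\{z\}$; since $\{z\}$ is free, $z$ is regular. But $y\langle x\rangle_R=0$ (because $y(rx)=r(yx)=0$), so in particular $yz=0$, whence $y=0$ by regularity of $z$, a contradiction. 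Thus $R$ is entire with every graded ideal monogeneous, i.e.\ principal, giving $(i)$.

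Finally, for $(i)\Rightarrow(iv)$ the structural work is done by \ref{a50}. The case $R=0$ is trivial (all modules are zero, hence free), so assume $R$ is principal. Every graded ideal is then monogeneous, hence zero or isomorphic to a shift of $R$, and in either case free, hence projective (\ref{ser85} C), D)). So the hypothesis of \ref{a50} is met, and any graded sub-$R$-module $M$ of a free $G$-graded $R$-module decomposes as $M\cong\bigoplus_{e\in E}\ia_e(g_e)$ with each $\ia_e$ a graded ideal; as each $\ia_e$ is monogeneous and thus a shift of $R$ or zero, $M$ is a direct sum of shifts of $R$ and therefore free (\ref{ser85} C)). The genuinely delicate points, as opposed to routine bookkeeping, are the commutativity trick bounding the rank in $(ii)\Rightarrow(iii)$ and the use of a regular basis element of $\langle x\rangle_R$ to force entirety in $(iii)\Rightarrow(i)$; the decomposition \ref{a50} is the main external input for $(i)\Rightarrow(iv)$.
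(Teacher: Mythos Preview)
Your proof is correct and follows essentially the same strategy as the paper's---both establish a cycle among the five statements and rely on \ref{a50} as the structural input for passing from ``ideals are free'' to ``submodules of free modules are free.'' The differences are in the ordering of the cycle and in two of the links.

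The paper proves $(i)\Rightarrow(ii)\Rightarrow(iii)\Rightarrow(iv)\Rightarrow(v)\Rightarrow(i)$ and invokes Lemma~\ref{a60} (a free submodule of a module generated by $n$ homogeneous elements has rank $\leq n$) twice: once for $(ii)\Rightarrow(iii)$, and again in $(v)\Rightarrow(i)$ to bound the rank of ideals. Your cycle $(i)\Rightarrow(iv)\Rightarrow(v)\Rightarrow(ii)\Rightarrow(iii)\Rightarrow(i)$ avoids \ref{a60} altogether: for $(ii)\Rightarrow(iii)$ you use the commutativity relation $e_2e_1-e_1e_2=0$ directly, which is more elementary and specific to the rank-$1$ case at hand. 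For entirety in $(iii)\Rightarrow(i)$, the paper argues that $\{x\}$ itself is free (since a single homogeneous generator of a rank-$1$ free module is a basis), whereas you pass to an arbitrary basis element $z$ of $\langle x\rangle_R$ and use its regularity to kill a putative annihilator $y$; both work and are close in spirit. Your route is slightly leaner in that it needs no auxiliary rank lemma, at the cost of bundling the verification that principal ideals are free into the $(i)\Rightarrow(iv)$ step rather than isolating it as $(i)\Rightarrow(ii)$.
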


\begin{proof}
If $R=0$, then this is obvious, so we suppose that $R\neq 0$. ``(i)$\Rightarrow$(ii)'': Entirety of $R$ implies that every homogeneous element of $R\setminus 0$ is free, hence principality of $R$ implies that every graded ideal of $R$ is free. ``(ii)$\Rightarrow$(iii)'': As $R$ is monogeneous, this follows from \ref{a60}. ``(iii)$\Rightarrow$(iv)'': By \ref{a50}, a graded sub-$R$-module of a free $G$-graded $R$-module is a direct sum of shifts of graded ideals of $R$, hence a direct sum of free graded $R$-modules and thus itself free. ``(iv)$\Rightarrow$(v)'': Obvious. ``(v)$\Rightarrow$(i)'': As $R$ is free of rank $1$, it follows from \ref{a60} that graded ideals of $R$ are free of rank at most $1$, hence monogeneous. If $x\in R\setminus 0$ is homogeneous, then the graded ideal $\langle x\rangle_R\subseteq R$ is free, hence $x$ is free. Therefore, $R$ is entire and thus principal.
\end{proof}

\begin{cor}\label{a90}
Suppose that $R$ is principal, and let $M$ be a $G$-graded $R$-module.

a) If $M$ is projective, then it is free.

b) $M$ is free if and only if $M\ps$ is so.
\end{cor}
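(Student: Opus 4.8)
The plan is to reduce both parts to the structure theorem \ref{a80} and to the already-established Proposition \ref{a30}. For part a), I would use the standard characterisation of projective objects in an abelian category with enough projectives as the direct summands of free ones. Since $M$ is projective, there is a free $G$-graded $R$-module $L$ and a graded sub-$R$-module $N\subseteq L$ with $L\cong M\oplus N$; in particular $M$ is isomorphic to a graded sub-$R$-module of a free $G$-graded $R$-module (the image of $M$ under $M\hookrightarrow M\oplus N\cong L$). As $R$ is principal, statement (i) of \ref{a80} holds, so by the implication (i)$\Rightarrow$(iv) every graded sub-$R$-module of a free $G$-graded $R$-module is free. Hence $M$ is free.

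For part b), the forward implication is immediate: if $M$ is free then $M\ps$ is free by \ref{ser85} D). For the converse I would invoke Proposition \ref{a30}, taking $\P$ to be the class of all $G$-graded $R$-modules. Part a) asserts precisely that the projective elements of this class are free, so the hypothesis of \ref{a30} is satisfied. The proposition then yields, for every $G$-graded $R$-module $M$, that $M$ is free if and only if $M\ps$ is free, which is the claim.

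I expect no serious obstacle, since the corollary is essentially a repackaging of \ref{a80} and \ref{a30}. The only point deserving a little care is the passage from \emph{projective} to \emph{direct summand of a free module}, which is the usual characterisation of projectivity, valid in $\md(R)$ because this category has enough projectives (the free modules being projective by \ref{ser85} D)). Everything beyond that is a direct citation of the earlier results, with part b) depending on part a) only through the hypothesis of \ref{a30}.
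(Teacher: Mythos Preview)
Your proposal is correct and matches the paper's own proof essentially verbatim: part a) is derived from \ref{a80} via the observation that a projective $G$-graded $R$-module is a graded submodule of a free one, and part b) is obtained from a) together with \ref{a30}. The only (harmless) redundancy is that you cite \ref{ser85} D) separately for the forward implication in b), whereas \ref{a30} already covers both directions.
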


\begin{proof}
a) follows from \ref{a80} since a projective $G$-graded $R$-module is a graded sub-$R$-modules of a free $G$-graded $R$-module. b) follows from a) and \ref{a30}.
\end{proof}


\section{Superfluous monomorphisms}

\begin{no}
A) A monomorphism $u\colon M\rightarrowtail N$ in $\md(R)$ is called \textit{superfluous} if when\-ever $v\colon L\rightarrowtail N$ is a monomorphism in $\md(R)$ with $\im(u)+\im(v)=N$, then $\im(v)=N$.\smallskip

B) Let $u\colon M\rightarrowtail N$ be a monomorphism in $\md(R)$. If $u\ps$ is superfluous, then so is $u$. Indeed, if $v\colon L\rightarrowtail N$ is a monomorphism with $\im(u)+\im(v)=N$, then $\im(u\ps)+\im(v\ps)=(\im(u)+\im(v))\ps=N\ps$. If $u\ps$ is superfluous, then it follows that $\im(v)\ps=\im(v\ps)=N\ps$ and hence $\im(v)=N$. Therefore, $u$ is superfluous.
\end{no}

\begin{prop}\label{super10}
If $\psi$ is not an isomorphism, then there exist a $G$-graded ring $R$ and a superfluous monomorphism $u$ in $\md(R)$ such that $u\ps$ is not superfluous.
\end{prop}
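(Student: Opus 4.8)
The plan is to exploit that $\bullet\ps$ can only enlarge the supply of homogeneous submodules, together with the fact that a \emph{unique maximal graded submodule} is automatically superfluous. So I would look for a $G$-graded ring $R$ and a graded $R$-module $N$ possessing a unique maximal graded submodule $M$; then $u\colon M\hookrightarrow N$ is superfluous, since any graded $v\colon L\rightarrowtail N$ with $\im(u)+\im(v)=N$ has $\im(v)\not\subseteq M$, whence $\im(v)=N$ by maximality. Everything then reduces to arranging that after coarsening $M\ps$ admits a \emph{proper complement}, i.e.\ a submodule $L\subsetneq N\ps$ — necessarily not $G$-graded — with $M\ps+L=N\ps$; this is exactly what destroys superfluousness of $u\ps$.

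Fix $f\in\ke(\psi)\setminus 0$ (possible as $\psi$ is not an isomorphism) and let $k$ be a field. Consider first the case where $f$ has infinite order. Then I would take $R\dfgl k[\theta]$ graded by $\deg(\theta)=f$, so that $R_0=k$ and $R$ is graded-local with unique maximal graded ideal $\theta R$; with $N\dfgl R$ and $M\dfgl\theta R$ the monomorphism $u\colon M\hookrightarrow N$ is superfluous by the previous paragraph. Since $f\in\ke(\psi)$, the element $\theta$ becomes homogeneous of degree $0$ after coarsening, and the underlying ring of $R\ps$ is again $k[\theta]$, whose Jacobson radical is zero. Hence the nonzero ideal $M\ps=\theta\,R\ps$ cannot be small: explicitly, $L\dfgl(\theta-1)$ is a proper ideal with $M\ps+L=R\ps$ (as $1=\theta-(\theta-1)$ and $\theta\in M\ps$), so $u\ps$ is not superfluous. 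The same mechanism works without any hypothesis on the order of $f$ as long as $\ke(\psi)$ is infinite: trivially grade $R\dfgl S^{(G)}$ over a non-perfect local ring $S$ (say a discrete valuation ring with maximal ideal $(x)$), put $N\dfgl\bigoplus_{g\in\ke(\psi)}Se_g$ with $e_g$ of degree $g$ and $M\dfgl\bigoplus_{g}(x)e_g$. Then $u$ is superfluous degreewise, since $(x)$ is small in $S$; while after coarsening the merged degree-$0$ component $\bigoplus_{g}Se_g$ contains the proper ``staircase'' submodule generated by the $e_{g_n}-x\,e_{g_{n+1}}$ (for a sequence $(g_n)$ in $\ke(\psi)$) together with the remaining $e_g$, which spans $N\ps$ modulo $M\ps$ but omits $e_{g_0}$ (here non-nilpotence of $x$ prevents the chain from reconstructing $e_{g_0}$).

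The step I expect to be the genuine obstacle is the case where $\ke(\psi)$ is finite (equivalently, $f$ has finite order), for then neither device applies: coarsening of a degree $h$ mixes only the finitely many degrees of a coset of $\ke(\psi)$, and a finite direct sum of superfluous submodules is again superfluous, so no staircase can be formed; moreover the natural graded-local $R$ one writes down (with $R_0$ local) has semilocal — indeed often perfect — underlying ring, forcing $M\ps\subseteq\mathrm{Rad}(N\ps)$ and hence $M\ps$ small. Overcoming this requires a $G$-graded ring whose $\psi$-coarsening is genuinely ``less local'' while $R$ itself stays graded-local, so that the graded radical $\mathrm{Rad}_{\mathrm{gr}}(N)$ fails to be contained in $\mathrm{Rad}(N\ps)$; producing such an $R$ (presumably the role of the example adapted from \cite[A.I.2.9]{nvo1}) is the crux of the argument.
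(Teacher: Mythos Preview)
Your construction for $f$ of infinite order is exactly the paper's: it takes any $g\in\ke(\psi)\setminus 0$, sets $R=K[X]$ with $\deg(X)=g$, and argues that $u\colon\langle X\rangle\hookrightarrow R$ is superfluous while $u\ps$ is not (using the graded ideal $\langle X+1\rangle\ps$, which is your $(\theta-1)$). The paper, however, does \emph{not} restrict to infinite-order $g$: for an arbitrary $g$ it writes $1=rX+f$ with $f\in\ia$ and asserts $(rX)_0=0$. That assertion is valid when $g$ has infinite order (then $R_0=K$ and $rX$ has no constant term), but it fails when $g$ has finite order $n$: then $X^n\in R_0$, and the graded ideal $\ia=\langle X^n-1\rangle$ satisfies $\langle X\rangle+\ia=R$ with $\ia\neq R$, so $\langle X\rangle$ is \emph{not} superfluous. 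Thus the paper's argument has precisely the gap you anticipated; your caution in separating the cases was warranted.

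Your instinct about the finite-kernel case is in fact sharper than you suggest: when $K\dfgl\ke(\psi)$ is finite, $\bullet\ps$ \emph{always} preserves superfluousness, so the proposition as stated cannot be proved. Indeed, the functor $P\mapsto P^{!}$ from $\md(R\ps)$ to $\md(R)$ given by $(P^{!})_g=P_{\psi(g)}$ reflects superfluousness (if $L'+M'=N'$ in $\md(R\ps)$ then $(L')^{!}+(M')^{!}=(N')^{!}$, and $(L')^{!}=(N')^{!}$ forces $L'=N'$), and there is a natural identification $(N\ps)^{!}\cong\bigoplus_{k\in K}N(k)$. Each $M(k)\subseteq N(k)$ is superfluous, and since a submodule that is superfluous in a summand is superfluous in the sum, and a finite sum of superfluous submodules is superfluous, we get that $(M\ps)^{!}\subseteq(N\ps)^{!}$ is superfluous; reflecting, $M\ps\subseteq N\ps$ is superfluous. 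So the obstacle you name is genuine rather than technical: the correct hypothesis is that $\ke(\psi)$ be infinite, and your staircase construction already handles that case in full (and subsumes the polynomial-ring example).
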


\begin{proof}
If $\psi$ is not an isomorphism, then there exists $g\in\ke(\psi)\setminus 0$. Let $R=K[X]$ be the polynomial algebra over a field $K$ in one variable $X$, furnished with the $G$-graduation with $\deg(X)=g$. We show that the canonical injection $u\colon\langle X\rangle\hookrightarrow R$ has the desired properties. If $\ia\subseteq R$ is a graded ideal with $\langle X\rangle+\ia=R$, then there exist $r\in R$ and $f\in\ia$ with $1=rX+f$. As $(rX)_0=0$ and $\ia$ is graded, taking components of degree $0$ yields $1=f_0\in\ia$, hence $\ia=R$. Therefore, $u$ is superfluous. On the other hand, $X+1\in(R\ps)_0$, hence $\langle X+1\rangle\ps\subsetneqq R\ps$ is a graded ideal. As $\langle X\rangle\ps+\langle X+1\rangle\ps=R\ps$ it follows that $u\ps$ is not superfluous.
\end{proof}


\section{Homological dimensions}

\noindent\textit{Throughout this section, let $\C$ and $\D$ be abelian categories.}

\begin{no}
If $A\in\ob(\C)$, then \[\dimp(A)\dfgl\inf\{n\in\N\mid\exists\text{ projective resolution of }A\text{ of length }n\}\in\N\cup\{\infty\}\] and \[\dimi(A)\dfgl\inf\{n\in\N\mid\exists\text{ injective resolution of }A\text{ of length }n\}\in\N\cup\{\infty\}\] are called \textit{the projective dimension of $A$} and \textit{the injective dimension of $A$,} resp.
\end{no}

\begin{prop}[Schanuel's Lemma]\label{dim10}
Let $A\in\ob(\C)$, let $n\in\N$, and let \[0\longrightarrow K\longrightarrow P_{n-1}\xrightarrow{p_{n-2}}P_{n-2}\longrightarrow\cdots\longrightarrow P_1\overset{p_0}\longrightarrow P_0\overset{\alpha}\longrightarrow A\longrightarrow 0\] and \[0\longrightarrow L\longrightarrow Q_{n-1}\xrightarrow{q_{n-2}}Q_{n-2}\longrightarrow\cdots\longrightarrow Q_1\overset{q_0}\longrightarrow Q_0\overset{\beta}\longrightarrow A\longrightarrow 0\] be exact sequences in $\C$ where $P_i$ and $Q_i$ are projective for every $i\in[0,n-1]$. Then, \[K\oplus Q_{n-1}\oplus P_{n-2}\oplus Q_{n-3}\oplus\cdots\oplus P_0\cong L\oplus P_{n-1}\oplus Q_{n-2}\oplus P_{n-3}\oplus\cdots\oplus Q_0\] if $n$ is even, and \[K\oplus Q_{n-1}\oplus P_{n-2}\oplus Q_{n-3}\oplus\cdots\oplus Q_0\cong L\oplus P_{n-1}\oplus Q_{n-2}\oplus P_{n-3}\oplus\cdots\oplus P_0\] if $n$ is odd.
\end{prop}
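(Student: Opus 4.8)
The plan is to prove the statement by induction on $n$, reducing the general case to the base case $n=1$ (the classical Schanuel Lemma for abelian categories). For the base case $n=1$ we are given exact sequences
\[0\longrightarrow K\longrightarrow P_0\overset{\alpha}\longrightarrow A\longrightarrow 0\quad\text{and}\quad 0\longrightarrow L\longrightarrow Q_0\overset{\beta}\longrightarrow A\longrightarrow 0\]
with $P_0,Q_0$ projective, and we must show $K\oplus Q_0\cong L\oplus P_0$. I would form the pullback $\Pi$ of $\alpha$ and $\beta$. Since $\alpha$ and $\beta$ are epimorphisms, the two projections $\Pi\rightarrow P_0$ and $\Pi\rightarrow Q_0$ are epimorphisms whose kernels are isomorphic to $L$ and $K$ respectively. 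This yields two short exact sequences $0\to L\to\Pi\to P_0\to 0$ and $0\to K\to\Pi\to Q_0\to 0$. Because $P_0$ and $Q_0$ are projective, both sequences split, giving $\Pi\cong L\oplus P_0$ and $\Pi\cong K\oplus Q_0$; comparing the two yields the desired isomorphism. The only delicate point here is verifying in a general abelian category (rather than a module category) that the pullback projections are epic with the stated kernels; this is a standard consequence of the universal property together with the fact that in an abelian category a pullback of an epimorphism is an epimorphism, and the kernel of one projection is carried isomorphically onto the kernel of the opposite map.

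For the inductive step, suppose $n\geq 2$ and the result holds for shorter resolutions. I would truncate both given sequences: writing $K_0\dfgl\ker(\alpha)=\im(p_0)$ and $L_0\dfgl\ker(\beta)=\im(q_0)$, the base case applied to $0\to K_0\to P_0\to A\to 0$ and $0\to L_0\to Q_0\to A\to 0$ gives $K_0\oplus Q_0\cong L_0\oplus P_0$. I then splice the remaining tails of length $n-1$ onto these: the sequence
\[0\longrightarrow K\longrightarrow P_{n-1}\longrightarrow\cdots\longrightarrow P_1\longrightarrow K_0\longrightarrow 0\]
is an exact sequence of length $n-1$ resolving $K_0$ by projectives, and similarly for $L_0$. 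After forming direct sums with the appropriate projective modules so that the two objects being resolved coincide (using $K_0\oplus Q_0\cong L_0\oplus P_0$ and adding a trivial resolution of the extra projective summand to make both sequences resolve the same object), the induction hypothesis applies and produces exactly the claimed isomorphism. The bookkeeping of which summands $P_i$ or $Q_i$ land on which side is governed by the parity of $n-1$, which is what accounts for the two displayed formulas according to whether $n$ is even or odd.

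I expect the main obstacle to be purely organizational rather than conceptual: correctly tracking the alternating pattern of $P_i$ and $Q_i$ summands through the induction and confirming that the parity flips as the length decreases by one. Concretely, splicing a length-$(n-1)$ resolution of $K_0$ onto the short exact sequence and then invoking the hypothesis requires padding with the correct projective summands so that both resolutions have a common rightmost object, and each such padding contributes a $P_i$ or $Q_i$ term. The care needed is to check that these contributions assemble into precisely the stated direct sums and that the base-case isomorphism $K_0\oplus Q_0\cong L_0\oplus P_0$ is absorbed compatibly; none of the individual steps is hard, but the indexing must be handled cleanly. A convenient alternative that avoids some of this bookkeeping is to prove directly, by induction, the single statement that the $n$-th syzygies of $A$ with respect to any two projective resolutions agree up to projective summands (the generalized Schanuel Lemma), and then read off the two displayed formulas as the even and odd specializations; I would adopt whichever formulation keeps the parity computation most transparent.
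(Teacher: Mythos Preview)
Your proposal is correct and follows essentially the same route as the paper: the paper also treats $n=1$ via the pullback (fibre product) of $\alpha$ and $\beta$, and for $n>1$ it applies the $n=1$ case to $\ker(\alpha)$ and $\ker(\beta)$, pads the truncated resolutions by $Q_0$ and $P_0$ respectively so that both resolve the common object $\ker(\alpha)\oplus Q_0\cong\ker(\beta)\oplus P_0$, and then invokes the induction hypothesis. The only omission is the trivial case $n=0$ (where $K\cong A\cong L$), which the paper records explicitly.
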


\begin{proof}
If $n=0$, then $K\cong A\cong L$ as claimed. Let $n=1$. Denoting by $R$ the fibre product of $\alpha$ and $\beta$, we get a commutative diagram \[\xymatrix{&&0&0&\\0\ar[r]&K\ar[r]&P_0\ar[u]\ar[r]^\alpha&A\ar[u]\ar[r]&0\\0\ar[r]&\ke(\alpha')\ar[u]_{\beta''}\ar[r]&R\ar[u]_{\beta'}\ar[r]^{\alpha'}&Q_0\ar[u]_\beta\ar[r]&0\\&&\ke(\beta')\ar[u]\ar[r]^(.6){\alpha''}&L\ar[u]&\\&&0\ar[u]&0\ar[u]&}\] with exact rows and columns such that $\alpha''$ and $\beta''$ are isomorphisms (\cite[08N3, 08N4]{stacks}). Projectivity of $P_0$ and $Q_0$ implies that $K\oplus Q_0\cong\ke(\alpha')\oplus Q_0\cong R\cong\ke(\beta')\oplus P_0\cong L\oplus P_0$ as desired. Let $n>1$, and suppose the claim to hold for strictly smaller values of $n$. We have exact sequences \[0\longrightarrow\ke(\alpha)\longrightarrow P_0\longrightarrow A\longrightarrow 0\quad\text{and}\quad 0\longrightarrow\ke(\beta)\longrightarrow Q_0\longrightarrow A\longrightarrow 0\] where $P_0$ and $Q_0$ are projective, hence $\ke(\alpha)\oplus Q_0\cong\ke(\beta)\oplus P_0$. Moreover, we have exact sequences \[0\longrightarrow K\longrightarrow P_{n-1}\longrightarrow\cdots\longrightarrow P_2\overset{p_1'}\longrightarrow P_1\oplus Q_0\overset{p_0'}\longrightarrow\ke(\alpha)\oplus Q_0\longrightarrow 0\] and \[0\longrightarrow L\longrightarrow Q_{n-1}\longrightarrow\cdots\longrightarrow Q_2\overset{q_1'}\longrightarrow Q_1\oplus P_0\overset{q_0'}\longrightarrow\ke(\beta)\oplus P_0\longrightarrow 0\] where $p_1'$, $q_1'$, $p_0'$ and $q_0'$ are induced by $(p_1,0)$, $(q_1,0)$, $(p_0,\Id_{Q_0})$ and $(q_0,\Id_{P_0})$, resp. Moreover, $P_i$ and $Q_i$ for $i\in[2,n-1]$ as well as $P_1\oplus Q_0$ and $Q_1\oplus P_0$ are projective. As $\ke(\alpha)\oplus Q_0\cong\ke(\beta)\oplus P_0$ we thus get the desired isomorphisms, and then the claim follows by induction.
\end{proof}

\begin{thm}\label{dim20}
Let $A\in\ob(\C)$, and let $n\in\N$. The following are equivalent:
\begin{aufz}
\item[(i)] $\dimp(A)\leq n$;
\item[(ii)] $A$ has a projective resolution, and if $0\rightarrow K\rightarrow P_{n-1}\rightarrow\cdots\rightarrow P_0\rightarrow A\rightarrow 0$ is an exact sequence in $\C$ where $P_i$ is projective for every $i\in[0,n-1]$, then $K$ is projective.
\end{aufz}
\end{thm}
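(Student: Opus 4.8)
The plan is to prove the two implications separately; essentially all of the content lies in (i)$\Rightarrow$(ii), where Schanuel's Lemma (\ref{dim10}) is the decisive tool.

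For (ii)$\Rightarrow$(i) I would argue as follows. By the first half of (ii), $A$ admits a projective resolution $\cdots\rightarrow P_1\rightarrow P_0\rightarrow A\rightarrow 0$. Truncating it in degree $n$ --- that is, setting $K\dfgl\ke(P_{n-1}\rightarrow P_{n-2})$, with the convention $P_{-1}\dfgl A$ so that for $n=0$ the sequence collapses to $0\rightarrow K\rightarrow A\rightarrow 0$ and $K\cong A$ --- produces an exact sequence $0\rightarrow K\rightarrow P_{n-1}\rightarrow\cdots\rightarrow P_0\rightarrow A\rightarrow 0$ in $\C$ with every $P_i$ projective. The second half of (ii) then forces $K$ to be projective, so this sequence is itself a projective resolution of $A$ of length $n$, and hence $\dimp(A)\leq n$.

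For (i)$\Rightarrow$(ii), assume $\dimp(A)\leq n$. Then $A$ has a projective resolution of length $n$, so in particular it has a projective resolution, which is the first assertion of (ii). For the second, fix any exact sequence $0\rightarrow K\rightarrow P_{n-1}\rightarrow\cdots\rightarrow P_0\rightarrow A\rightarrow 0$ with all $P_i$ projective; the task is to show that $K$ is projective. The idea is to compare this with a second sequence of exactly the same length whose leftmost term is manifestly projective. Since $\dimp(A)\leq n$, there is a projective resolution of $A$ of some length $m\leq n$; padding it on the left with zero objects (which are projective, and for which exactness is immediate) I may assume it has length exactly $n$, say $0\rightarrow L\rightarrow Q_{n-1}\rightarrow\cdots\rightarrow Q_0\rightarrow A\rightarrow 0$ with $L$ and all $Q_i$ projective (explicitly, $L=0$ when $m<n$ and $L$ is the original leftmost term when $m=n$).

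Now I would apply Schanuel's Lemma (\ref{dim10}) to these two exact sequences of length $n$; note that the hypotheses of \ref{dim10} require only the $P_i$ and $Q_i$ to be projective, not $K$ or $L$. Regardless of the parity of $n$, it yields an isomorphism in which $K$, summed with a finite direct sum of certain of the $P_i$ and $Q_i$, is isomorphic to $L$, summed with a finite direct sum of certain of the $P_i$ and $Q_i$. Since all these summands are projective and direct sums of projectives are projective, the right-hand side is projective; as $L$ too is projective, $K$ is thereby exhibited as a direct summand of a projective object, and hence is projective. I expect the only delicate point to be the bookkeeping in setting up the comparison sequence of the correct length with a projective leftmost term, so that \ref{dim10} applies verbatim; the conceptual heart of the argument is merely the stability of projectivity under passage to direct summands.
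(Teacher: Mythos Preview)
Your proposal is correct and follows essentially the same approach as the paper: for (ii)$\Rightarrow$(i) you truncate a projective resolution and apply the hypothesis, and for (i)$\Rightarrow$(ii) you invoke Schanuel's Lemma (\ref{dim10}) to compare the given sequence with a length-$n$ projective resolution of $A$, concluding that $K$ is a direct summand of a projective object. The only cosmetic difference is that the paper writes the comparison resolution directly as $0\rightarrow Q_n\rightarrow Q_{n-1}\rightarrow\cdots\rightarrow Q_0\rightarrow A\rightarrow 0$ with all $Q_i$ projective (so $Q_n$ plays the role of your $L$), leaving the padding-by-zeros implicit, whereas you spell it out.
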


\begin{proof}
``(i)$\Rightarrow$(ii)'': Finiteness of $\dimp(A)$ implies that $A$ has a projective resolution. Let \[0\longrightarrow K\longrightarrow P_{n-1}\longrightarrow\cdots\longrightarrow P_0\longrightarrow A\longrightarrow 0\] be an exact sequence where $P_i$ is projective for every $i\in[0,n-1]$. By (i) there exists an exact sequence \[0\longrightarrow Q_n\longrightarrow Q_{n-1}\longrightarrow\cdots\longrightarrow Q_0\longrightarrow A\longrightarrow 0\] where $Q_i$ is projective for every $i\in[0,n]$, hence $K$ is a direct summand of a projective object by \ref{dim10} and thus itself projective. ``(ii)$\Rightarrow$(i)'': There exists an exact sequence \[0\longrightarrow K\longrightarrow P_{n-1}\longrightarrow\cdots\longrightarrow P_0\longrightarrow A\longrightarrow 0\] where $P_i$ is projective for every $i\in[0,n-1]$, hence $K$ is projective by (ii), thus $A$ has a projective resolution of length $n$, and therefore $\dimp(A)\leq n$.
\end{proof}

\begin{prop}\label{dim30}
Let $F\colon\C\rightarrow\D$ be an exact functor. 

a) $F$ respects projectivity if and only if $\dimp(A)\geq\dimp(F(A))$ for every $A\in\ob(\C)$.

b) If $A\in\ob(\C)$ has a projective resolution and $F$ respects and reflects projectivity, then $\dimp(A)=\dimp(F(A))$.
\end{prop}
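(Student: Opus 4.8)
The plan is to handle the two parts using Theorem \ref{dim20} together with the observation that $\dimp(B)=0$ if and only if $B$ is projective, and I begin with part a). For the forward implication, assume $F$ respects projectivity and let $A\in\ob(\C)$; the case $\dimp(A)=\infty$ being vacuous, suppose $\dimp(A)=n<\infty$, so that $A$ has a projective resolution of length $n$. Since $F$ is exact and carries projectives to projectives, applying $F$ turns this into a projective resolution of $F(A)$ of length $n$, whence $\dimp(F(A))\leq\dimp(A)$. For the converse I would specialise the hypothesis to a projective object $P\in\ob(\C)$: then $\dimp(P)=0$ forces $\dimp(F(P))=0$, that is, $F(P)$ is projective, so $F$ respects projectivity.

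For part b), part a) already supplies $\dimp(F(A))\leq\dimp(A)$, so the work lies in the reverse inequality. I would set $n\dfgl\dimp(F(A))$, dispose of the case $n=\infty$ (where nothing is to prove), and then truncate a projective resolution of $A$ — which exists by hypothesis — to an exact sequence $0\to K\to P_{n-1}\to\cdots\to P_0\to A\to 0$ with each $P_i$ projective. Applying the exact functor $F$ and using that $F$ respects projectivity yields an exact sequence $0\to F(K)\to F(P_{n-1})\to\cdots\to F(P_0)\to F(A)\to 0$ with projective terms $F(P_i)$; since $\dimp(F(A))=n$, Theorem \ref{dim20} shows that $F(K)$ is projective. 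Because $F$ reflects projectivity, $K$ is then projective, so the truncated sequence over $A$ is itself a projective resolution of length $n$, giving $\dimp(A)\leq n$ and hence the desired equality.

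The verifications of exactness and of the characterisation of projective objects as those $B$ with $\dimp(B)=0$ are routine. The application of Theorem \ref{dim20} to $F(A)$ requires $\dimp(F(A))$ to be finite, which is exactly why the case $\dimp(F(A))=\infty$ must be set aside at the outset. The conceptual heart of part b), and the step I expect to need the most care, is the twofold use of the hypotheses on $F$: respecting projectivity keeps the terms $F(P_i)$ projective after applying $F$, while reflecting projectivity is what transports projectivity of the syzygy from $F(K)$ back to $K$.
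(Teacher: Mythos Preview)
Your proof is correct and follows essentially the same route as the paper's: for a) you push projective resolutions through $F$ for the forward direction and specialise to a projective object for the converse, and for b) you truncate a projective resolution of $A$, apply $F$, invoke Theorem~\ref{dim20} to get projectivity of the image of the syzygy, and reflect back. The only cosmetic difference is that the paper phrases the converse in a) contrapositively and writes the truncated resolution without naming the kernel explicitly.
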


\begin{proof}
a) If $F$ respects projectivity, then $F$ maps a projective resolution of $A\in\ob(\C)$ of length $n$ to a projective resolution of $F(A)$ of length $n$, implying $\dimp(A)\geq\dimp(F(A))$. Conversely, if $A\in\ob(\C)$ is projective while $F(A)$ is not, then $\dimp(A)=0<\dimp(F(A))$.

b) By a) it suffices to show that $n\dfgl\dimp(F(A))\geq\dimp(A)$. If $n=\infty$, then this is clear. If $n<\infty$, then there exists an exact sequence \[P_{n-1}\xrightarrow{p_{n-1}}\cdots\longrightarrow P_0\longrightarrow A\longrightarrow 0\] where $P_i$ is projective for every $i\in[0,n-1]$. Applying $F$ yields an exact sequence \[F(P_{n-1})\xrightarrow{F(p_{n-1})}\cdots\longrightarrow F(P_0)\longrightarrow F(A)\longrightarrow 0\] where $F(P_i)$ is projective for every $i\in[0,n-1]$, thus $F(\ke(p_{n-1}))=\ke(F(p_{n-1}))$ is projective by \ref{dim20}, and therefore $\dimp(A)\leq n$ as desired.
\end{proof}

\begin{cor}\label{dim40}
Suppose that $\C$ has enough projectives, and let $F\colon\C\rightarrow\D$ be an exact functor. Then, $F$ respects and reflects projectivity if and only if $\dimp(A)=\dimp(F(A))$ for every $A\in\ob(\C)$.
\end{cor}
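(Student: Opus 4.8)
The plan is to derive both implications directly from Proposition \ref{dim30}, the only additional ingredient being the elementary observation that for any object $B$ of an abelian category one has $\dimp(B)=0$ if and only if $B$ is projective: a projective resolution of length $0$ is precisely an exact sequence $0\rightarrow P_0\rightarrow B\rightarrow 0$, i.e.\ an isomorphism $B\cong P_0$ onto a projective object. This characterisation holds in both $\C$ and $\D$.

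For the forward implication I would argue as follows. Suppose $F$ respects and reflects projectivity. Since $\C$ has enough projectives, every $A\in\ob(\C)$ admits a projective resolution, so the hypotheses of Proposition \ref{dim30} b) are met for each $A$, and that result yields $\dimp(A)=\dimp(F(A))$ for every $A\in\ob(\C)$. This is the only point at which the assumption on $\C$ is used, and it serves solely to supply the projective resolution demanded by \ref{dim30} b).

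For the backward implication, suppose $\dimp(A)=\dimp(F(A))$ for every $A\in\ob(\C)$. In particular $\dimp(A)\ge\dimp(F(A))$ for every $A$, so $F$ respects projectivity by Proposition \ref{dim30} a). To see that $F$ reflects projectivity, let $A\in\ob(\C)$ be such that $F(A)$ is projective; then $\dimp(F(A))=0$, hence $\dimp(A)=0$ by hypothesis, and therefore $A$ is projective by the characterisation above. Thus $F$ both respects and reflects projectivity, completing the equivalence.

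I expect no genuine obstacle here: the corollary is essentially a repackaging of parts a) and b) of Proposition \ref{dim30}. The only steps requiring a modicum of care are recognising that the reflection property must be recovered from the \emph{numerical} equality $\dimp(A)=\dimp(F(A))$ by specialising to the case $\dimp(F(A))=0$, and keeping track of the fact that the ``enough projectives'' hypothesis is invoked only to guarantee existence of projective resolutions in the forward direction.
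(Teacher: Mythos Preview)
Your proposal is correct and follows essentially the same approach as the paper. The paper's proof is more terse: it phrases the reflection step contrapositively (``If $A\in\ob(\C)$ is not projective but $F(A)$ is so, then $\dimp(A)>0=\dimp(F(A))$'') and then simply says the rest follows from \ref{dim30}, but the logical content is identical to what you have written.
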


\begin{proof}
If $A\in\ob(\C)$ is not projective but $F(A)$ is so, then $\dimp(A)>0=\dimp(F(A))$. So, the claim follows from \ref{dim30}.
\end{proof}

\begin{no}\label{dim50}
Propositions \ref{dim10}, \ref{dim20}, \ref{dim30} and \ref{dim40} can be dualised to yield corresponding results about injective dimensions.
\end{no}

\begin{no}\label{dim60}
A) A \textit{cogenerator of\/ $\C$} is an $E\in\ob(\C)$ such that the contravariant functor $\hm{\C}{\bullet}{E}\colon\C\rightarrow\ab$ is faithful.\smallskip

B) Let $E$ be an injective cogenerator of $\C$, and let $u\colon A\rightarrow B$ be a morphism in $\C$. Then, $u$ is a monomorphism if and only if $\hm{\C}{u}{E}$ is an epimorphism. Indeed, injectivity of $E$ implies that $\hm{\C}{\bullet}{E}$ turns monomorphisms into epimorphisms. Conversely, suppose that $\hm{\C}{u}{E}$ is an epimorphism, let $C\in\ob(\C)$, and let $v,w\in\hm{\C}{C}{A}$ with $u\circ v=u\circ w$. Then, $\hm{\C}{v}{E}\circ\hm{\C}{u}{E}=\hm{\C}{w}{E}\circ\hm{\C}{u}{E}$, hence $\hm{\C}{v}{E}=\hm{\C}{w}{E}$, and faithfulness of $\hm{\C}{\bullet}{E}$ implies that $v=w$. Thus, $u$ is a monomorphism.
\end{no}

\begin{no}\label{dim70}
A) For a $G$-graded $R$-module $M$ we have a contravariant functor \[\hm{R}{\bullet}{M}\dfgl\bigoplus_{g\in G}\hm{\md(R)}{\bullet}{M(g)}\] that is exact if and only if $M$ is injective (\cite[A.I.2.4]{nvo1}).\smallskip

B) If $N$ is a further $G$-graded $R$-module, then by \cite[2.5 B)]{gcor} there is a canonical isomorphism of contravariant functors \[\hm{R}{\bullet\otimes_RM}{N}\cong\hm{R}{\bullet}{\hm{R}{M}{N}}.\]
\end{no}

\begin{prop}\label{dim80}
Let $E$ be an injective cogenerator of $\md(R)$.

a) The contravariant functor $\hm{R}{\bullet}{E}\colon\md(R)\rightarrow\md(R)$ is faithful and exact.

b) A morphism $u$ of $G$-graded $R$-modules is a monomorphism if and only if $\hm{R}{u}{E}$ is an epimorphism.
\end{prop}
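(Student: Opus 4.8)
The plan is to reduce everything to the degree-$0$ strand of the graded Hom functor, which recovers the ordinary Hom functor $\hm{\md(R)}{\bullet}{E}\colon\md(R)\to\ab$ to which the cogenerator hypothesis and \ref{dim60} apply directly. For a), exactness of $\hm{R}{\bullet}{E}$ is immediate from \ref{dim70} A), since an injective cogenerator is in particular injective. The substance of a) is therefore faithfulness, and the substance of b) is the characterisation of monomorphisms; both follow once the identification of the degree-$0$ strand with $\hm{\md(R)}{\bullet}{E}$ is made explicit.

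For faithfulness in a), I would observe that the degree-$0$ component of $\hm{R}{M}{E}=\bigoplus_{g\in G}\hm{\md(R)}{M}{E(g)}$ is precisely $\hm{\md(R)}{M}{E}$, and that the degree-$0$ component of $\hm{R}{u}{E}$ is $\hm{\md(R)}{u}{E}$ for any morphism $u$. Hence if $\hm{R}{u}{E}=\hm{R}{v}{E}$ for parallel morphisms $u,v$, then comparing degree-$0$ components gives $\hm{\md(R)}{u}{E}=\hm{\md(R)}{v}{E}$. Since $E$ is a cogenerator, $\hm{\md(R)}{\bullet}{E}$ is faithful by \ref{dim60} A), and therefore $u=v$. (Note that agreement in degree $0$ alone already forces $u=v$.)

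For b), the forward implication is formal: if $u$ is a monomorphism, then because $\hm{R}{\bullet}{E}$ is exact by a) and contravariant, it turns this monomorphism into the epimorphism $\hm{R}{u}{E}$. For the converse, I would use that an epimorphism in $\md(R)$ is a surjection, hence surjective in every degree; restricting to degree $0$ shows that $\hm{\md(R)}{u}{E}$ is an epimorphism in $\ab$. Applying \ref{dim60} B) to the abelian category $\md(R)$ equipped with its injective cogenerator $E$ then yields that $u$ is a monomorphism, completing the equivalence.

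The only genuine subtlety, which I would take care to spell out, is the bookkeeping relating the $\md(R)$-valued functor $\hm{R}{\bullet}{E}$ to the $\ab$-valued functor $\hm{\md(R)}{\bullet}{E}$: namely that the degree-$0$ strand of the graded Hom reproduces the ordinary Hom functor, and that both faithfulness and the epi/mono test for the graded functor can be checked on this single strand, using that images and (co)kernels of graded morphisms are computed degreewise. Once this identification is in place, part a) reduces to the cogenerator property and part b) reduces to \ref{dim60} B), with no further computation required.
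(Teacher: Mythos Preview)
Your proof is correct and close in spirit to the paper's, but there is a small genuine difference worth noting. The paper argues that for every $g\in G$ the shift $\bullet(g)$ is an automorphism of $\md(R)$, so each $E(g)$ is again an injective cogenerator; hence every summand $\hm{\md(R)}{\bullet}{E(g)}$ is faithful, and a direct sum of faithful functors is faithful. For b) the paper similarly unwinds $\hm{R}{u}{E}$ as $\bigoplus_{g\in G}\hm{\md(R)}{u}{E(g)}$ and reduces to \ref{dim60} B) applied simultaneously to all $g$. Your route is more economical: you use only the degree-$0$ strand, which is $\hm{\md(R)}{\bullet}{E}$ itself, observe that a single faithful summand already forces faithfulness of the direct sum, and that surjectivity of $\hm{R}{u}{E}$ in degree $0$ alone suffices to invoke \ref{dim60} B). Both arguments are sound; yours avoids the (easy) step of checking that shifts preserve the cogenerator property, while the paper's version makes the degreewise symmetry of the graded Hom more visible.
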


\begin{proof}
a) Exactness holds by \ref{dim70} A). If $g\in G$, then $\bullet(g)$ is an automorphism of $\md(R)$, hence $E(g)$ is an injective cogenerator, and thus $\hm{\md(R)}{\bullet}{E(g)}$ is faithful. As direct sums are faithful, it follows that $\hm{R}{\bullet}{E}$ is faithful.

b) The morphism $\hm{R}{u}{E}=\bigoplus_{g\in G}\hm{\md(R)}{u}{E(g)}$ is an epimorphism if and only if $\hm{\md(R)}{u}{E(g)}$ is so for every $g\in G$, hence the claim follows from \ref{dim60} B).
\end{proof}

\begin{prop}[Lambek's Lemma]\label{dim90}
Let $E$ be an injective cogenerator of $\md(R)$, and let $M$ be a $G$-graded $R$-module. Then, $M$ is flat if and only if $\hm{R}{M}{E}$ is injective.
\end{prop}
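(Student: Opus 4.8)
=== PROOF PROPOSAL ===

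The plan is to prove Lambek's Lemma by reducing flatness of $M$ to exactness of the functor $\bullet\otimes_RM$, and then transporting this exactness across the duality $\hm{R}{\bullet}{E}$ to exactness of the covariant functor $\hm{R}{\bullet}{\hm{R}{M}{E}}$, which by \ref{dim70} A) is equivalent to injectivity of $\hm{R}{M}{E}$. The central tool is the canonical isomorphism of contravariant functors from \ref{dim70} B), namely $\hm{R}{\bullet\otimes_RM}{E}\cong\hm{R}{\bullet}{\hm{R}{M}{E}}$.

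First I would recall that $M$ is flat precisely when the functor $\bullet\otimes_RM\colon\md(R)\rightarrow\md(R)$ is exact. Since $\bullet\otimes_RM$ is always right exact, flatness is equivalent to its preserving monomorphisms, i.e.\ to $u\otimes_R\Id_M$ being a monomorphism whenever $u$ is. Next I would use \ref{dim80}, which provides, for the injective cogenerator $E$, a faithful exact contravariant functor $\hm{R}{\bullet}{E}$ such that a morphism $u$ is a monomorphism if and only if $\hm{R}{u}{E}$ is an epimorphism (part b)). The strategy is therefore: a morphism $u\otimes_R\Id_M$ is a monomorphism iff $\hm{R}{u\otimes_R\Id_M}{E}$ is an epimorphism, and under the natural isomorphism of \ref{dim70} B) this latter morphism is identified with $\hm{R}{u}{\hm{R}{M}{E}}$.

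Carrying this out, I would argue both directions at once. The functor $\hm{R}{M}{E}$ is injective if and only if the covariant functor $\hm{R}{\bullet}{\hm{R}{M}{E}}$ is exact, by \ref{dim70} A) applied to the graded module $\hm{R}{M}{E}$. Because $\hm{R}{\bullet}{N}$ is always left exact, this exactness is equivalent to its sending epimorphisms to epimorphisms, equivalently (reading contravariance) to its sending monomorphisms $u$ to epimorphisms $\hm{R}{u}{\hm{R}{M}{E}}$. By naturality of the isomorphism in \ref{dim70} B), $\hm{R}{u}{\hm{R}{M}{E}}$ is an epimorphism exactly when $\hm{R}{u\otimes_R\Id_M}{E}$ is, which by \ref{dim80} b) happens exactly when $u\otimes_R\Id_M$ is a monomorphism. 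Thus $\hm{R}{M}{E}$ is injective iff $u\otimes_R\Id_M$ is a monomorphism for every monomorphism $u$, i.e.\ iff $M$ is flat.

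The main obstacle I anticipate is bookkeeping rather than conceptual: one must make sure the isomorphism of \ref{dim70} B) is genuinely natural in the first variable so that ``$\hm{R}{u}{\hm{R}{M}{E}}$ is an epimorphism'' corresponds precisely to ``$\hm{R}{u\otimes_R\Id_M}{E}$ is an epimorphism.'' I would therefore emphasize that the isomorphism is an isomorphism of functors (not merely of objects), so that it intertwines the induced maps on Hom-groups for any morphism $u$. Once naturality is in hand, the equivalence propagates cleanly through \ref{dim80} b) and the characterizations of flatness and injectivity, and no delicate computation with homogeneous components is required.
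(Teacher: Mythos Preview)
Your argument is correct and follows essentially the same route as the paper's proof: flatness of $M$ is characterised via preservation of monomorphisms by $\bullet\otimes_RM$, which via \ref{dim80} b) and the tensor--Hom adjunction \ref{dim70} B) is translated into $\hm{R}{u}{\hm{R}{M}{E}}$ being an epimorphism for every monomorphism $u$, i.e.\ injectivity of $\hm{R}{M}{E}$ by \ref{dim70} A). One small slip: $\hm{R}{\bullet}{\hm{R}{M}{E}}$ is contravariant, not covariant, so the relevant condition is that it sends monomorphisms to epimorphisms (which you do arrive at); otherwise your proof matches the paper's.
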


\begin{proof}
The $G$-graded $R$-module $M$ is flat if and only if $u\otimes_RM$ is a monomorphism for every monomorphism $u$, hence if and only if $\hm{R}{u\otimes_RM}{E}$ is an epimorphism for every monomorphism $u$ (\ref{dim80} b)), thus if and only if $\hm{R}{u}{\hm{R}{M}{E}}$ is an epimorphism for every monomorphism $u$ (\ref{dim70} B)), and therefore if and only if $\hm{R}{M}{E}$ is injective (\ref{dim70} A)).
\end{proof}

\begin{no}
If $M$ is a $G$-graded $R$-module, then \[\dimpl(M)\dfgl\inf\{n\in\N\mid\exists\text{ flat resolution of }M\text{ of length }n\}\in\N\cup\{\infty\}\] is called \textit{the flat dimension of $A$} (and sometimes also \textit{the weak dimension of $M$}).
\end{no}

\begin{lemma}\label{dim100}
Let $E$ be an injective cogenerator of $\md(R)$, and let $M$ be a $G$-graded $R$-module. Then, $\dimi(\hm{R}{M}{E})\leq\dimpl(M)$.
\end{lemma}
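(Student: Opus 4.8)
The plan is to bound the injective dimension of $\hm{R}{M}{E}$ by the flat dimension of $M$ using the contravariant exact functor $\hm{R}{\bullet}{E}$ together with Lambek's Lemma. First I would dispose of the case $\dimpl(M)=\infty$, where the inequality is trivial. So I would suppose $n\dfgl\dimpl(M)<\infty$, which means there exists a flat resolution of $M$ of length $n$, i.e.\ an exact sequence
\[0\longrightarrow F_n\longrightarrow F_{n-1}\longrightarrow\cdots\longrightarrow F_0\longrightarrow M\longrightarrow 0\]
in $\md(R)$ in which every $F_i$ is flat.

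Next I would apply the contravariant functor $\hm{R}{\bullet}{E}$ to this resolution. By \ref{dim80} a) this functor is exact, so it turns the flat resolution into an exact sequence
\[0\longrightarrow\hm{R}{M}{E}\longrightarrow\hm{R}{F_0}{E}\longrightarrow\cdots\longrightarrow\hm{R}{F_n}{E}\longrightarrow 0\]
running in the opposite direction; this is precisely a coresolution of $\hm{R}{M}{E}$ of length $n$. By Lambek's Lemma (\ref{dim90}), flatness of each $F_i$ is equivalent to injectivity of $\hm{R}{F_i}{E}$, so every term $\hm{R}{F_i}{E}$ appearing in this coresolution is injective. Hence this is an injective resolution of $\hm{R}{M}{E}$ of length $n$, and therefore $\dimi(\hm{R}{M}{E})\leq n=\dimpl(M)$, as claimed.

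The only point requiring a little care is the bookkeeping of variance: since $\hm{R}{\bullet}{E}$ is contravariant and exact, it sends the given left exact sequence (with $M$ at the right end and $F_n$ at the left) to a sequence with $\hm{R}{M}{E}$ at the left end and $\hm{R}{F_n}{E}$ at the right end, with all arrows reversed, and one should check that exactness is genuinely preserved at each spot (which is exactly what exactness of the functor guarantees). I do not expect any real obstacle here; the substance of the argument lies entirely in the two previously established facts, namely exactness and faithfulness of $\hm{R}{\bullet}{E}$ from \ref{dim80} and the flat-to-injective translation from \ref{dim90}.
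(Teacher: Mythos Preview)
Your proof is correct and follows exactly the paper's approach: apply the exact contravariant functor $\hm{R}{\bullet}{E}$ (\ref{dim80}) to a flat resolution of length $n$ and use Lambek's Lemma (\ref{dim90}) to see that the result is an injective resolution of $\hm{R}{M}{E}$ of the same length. The paper compresses this into a single sentence, but the content is identical.
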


\begin{proof}
If $n\in\N$, then $\hm{R}{\bullet}{E}$ turns a flat resolution of $M$ of length $n$ into an injective resolution of $\hm{R}{M}{E}$ of length $n$ (\ref{dim80}, \ref{dim90}).
\end{proof}

\begin{thm}\label{dim110}
Let $M$ be a $G$-graded $R$-module, and let $n\in\N$. The following are equivalent:
\begin{aufz}
\item[(i)] $\dimpl(M)\leq n$;
\item[(ii)] If $0\rightarrow K\rightarrow P_{n-1}\rightarrow\cdots\rightarrow P_0\rightarrow M\rightarrow 0$ is an exact sequence of $G$-graded $R$-modules where $P_i$ is flat for every $i\in[0,n-1]$, then $K$ is flat.
\end{aufz}
\end{thm}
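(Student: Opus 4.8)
The plan is to transport the statement across the exact, faithful, contravariant duality functor $D\dfgl\hm{R}{\bullet}{E}$, where $E$ is a fixed injective cogenerator of $\md(R)$ (cf.\ \ref{dim90}). By \ref{dim80} this functor is exact and faithful, so it sends an exact sequence of graded modules to an exact sequence with all arrows reversed, and by Lambek's Lemma (\ref{dim90}) it carries flat modules to injective ones. Thus a flat-resolution question for $M$ becomes an injective-coresolution question for $D(M)=\hm{R}{M}{E}$, to which the injective-dimension analogue of Theorem \ref{dim20} (obtained by dualising, cf.\ \ref{dim50}) applies.

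For the implication (i)$\Rightarrow$(ii) I would argue as follows. Let $0\rightarrow K\rightarrow P_{n-1}\rightarrow\cdots\rightarrow P_0\rightarrow M\rightarrow 0$ be exact with each $P_i$ flat. Applying $D$ yields an exact sequence $0\rightarrow D(M)\rightarrow D(P_0)\rightarrow\cdots\rightarrow D(P_{n-1})\rightarrow D(K)\rightarrow 0$ in which every $D(P_i)$ is injective by \ref{dim90}. By Lemma \ref{dim100} we have $\dimi(D(M))\leq\dimpl(M)\leq n$, so the dual of Theorem \ref{dim20} (\ref{dim50}), applied to this coresolution of $D(M)$, forces the terminal cosyzygy $D(K)$ to be injective. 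Lambek's Lemma (\ref{dim90}) then returns that $K$ is flat, as required.

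The converse (ii)$\Rightarrow$(i) is the easy direction and does not use the duality at all. Since $\md(R)$ has enough projectives and projective graded modules are flat, I can splice together a partial flat (indeed projective) resolution $0\rightarrow K\rightarrow P_{n-1}\rightarrow\cdots\rightarrow P_0\rightarrow M\rightarrow 0$. Hypothesis (ii) then asserts that $K$ is flat, so this sequence is a flat resolution of $M$ of length $n$, whence $\dimpl(M)\leq n$.

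The one genuinely load-bearing point, and the step I expect to be the crux, is the inequality $\dimi(D(M))\leq\dimpl(M)$ furnished by Lemma \ref{dim100}: it is precisely what lets me invoke the injective analogue of Theorem \ref{dim20} with the \emph{same} bound $n$, without ever needing the reverse inequality $\dimpl(M)\leq\dimi(D(M))$. Everything else is the formal behaviour of the exact faithful functor $D$ under dualisation, so no further delicacy about coarsening or gradings is involved.
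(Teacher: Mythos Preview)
Your proposal is correct and mirrors the paper's proof essentially step for step: both directions are argued exactly as you describe, with the duality $D=\hm{R}{\bullet}{E}$, Lambek's Lemma (\ref{dim90}), Lemma \ref{dim100}, and the injective analogue of \ref{dim20} via \ref{dim50} playing the same roles. Your identification of \ref{dim100} as the load-bearing inequality is also the point the paper's argument hinges on.
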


\begin{proof}
``(i)$\Rightarrow$(ii)'': Let \[0\longrightarrow K\longrightarrow P_{n-1}\longrightarrow\cdots\longrightarrow P_0\longrightarrow M\longrightarrow 0\] be an exact sequence where $P_i$ is flat for every $i\in[0,n-1]$. By \cite[p.\ 4]{nvo1} and \cite[p.\ 135]{tohoku} there exists an injective cogenerator $E$ of $\md(R)$. Applying $\hm{R}{\bullet}{E}$ yields an exact sequence \[0\rightarrow\hm{R}{M}{E}\rightarrow\hm{R}{P_0}{E}\rightarrow\cdots\rightarrow\hm{R}{P_{n-1}}{E}\rightarrow\hm{R}{K}{E}\rightarrow 0\] where $\hm{R}{P_i}{E}$ is injective for every $i\in[0,n-1]$ (\ref{dim80}, \ref{dim90}). Now, we have $\dimi(\hm{R}{M}{E})\leq n$ (\ref{dim100}), hence $\hm{R}{K}{E}$ is injective (\ref{dim20}, \ref{dim50}), and thus $K$ is flat (\ref{dim90}). ``(ii)$\Rightarrow$(i)'': There exists an exact sequence \[0\longrightarrow K\longrightarrow P_{n-1}\longrightarrow\cdots\longrightarrow P_0\longrightarrow M\longrightarrow 0\] where $P_i$ is flat for every $i\in[0,n-1]$. Now, (ii) implies that $K$ is flat, hence $M$ has a flat resolution of length $n$, and thus $\dimpl(M)\leq n$.
\end{proof}

\begin{prop}\label{dim120}
a) If $M$ is a $G$-graded $R$-module, then $\dimp(M)=\dimp(M\ps)$ and $\dimpl(M)=\dimpl(M\ps)$.

b) $\ke(\psi)$ is finite if and only if $\dimi(M)=\dimi(M\ps)$ for every $G$-graded $R$-module $M$.
\end{prop}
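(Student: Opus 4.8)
The plan is to prove \ref{dim120} by treating the three dimensions separately, in each case exploiting the exactness of $\bullet\ps\colon\md(R)\to\md(R\ps)$ together with the homological criteria established above. Throughout I use two structural facts about $\bullet\ps$: first, it is faithful and exact, since it merely regroups homogeneous components along $\psi$, so in particular it reflects monomorphisms; second, it admits an exact faithful right adjoint $\iota$, given on objects by $\iota(N)_g=N_{\psi(g)}$ for $N\in\md(R\ps)$ and $g\in G$ (faithfulness of $\iota$ follows from surjectivity of $\psi$). The adjunction $\bullet\ps\dashv\iota$ with both functors exact is the common engine for all three parts.

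For the projective dimension I apply \ref{dim40} to $F=\bullet\ps$. As $\md(R)$ has enough projectives and $\bullet\ps$ is exact, it suffices to show that $\bullet\ps$ respects and reflects projectivity. It respects projectivity because it is a left adjoint of the exact functor $\iota$ (equivalently, it sends $R(g)$ to $R\ps(\psi(g))$ and preserves direct summands, so it carries free objects to free objects and hence projectives to projectives by \ref{ser85} D)); it reflects projectivity by \cite[A.I.2.2]{nvo1}, as already used in \ref{a30}. Thus $\dimp(M)=\dimp(M\ps)$.

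For the flat dimension the crux is that $\bullet\ps$ respects and reflects flatness; granting this, $\dimpl(M)=\dimpl(M\ps)$ follows by imitating the proof of \ref{dim30} with \ref{dim110} in place of \ref{dim20}. Indeed, coarsening a flat resolution of $M$ gives a flat resolution of $M\ps$, so $\dimpl(M)\geq\dimpl(M\ps)$; and if $n\dfgl\dimpl(M\ps)<\infty$, one chooses an exact sequence $0\to K\to P_{n-1}\to\cdots\to P_0\to M\to 0$ with the $P_i$ flat (possible since projectives are flat), coarsens it, applies \ref{dim110} over $R\ps$ to conclude that $K\ps$ is flat, and then reflects flatness to obtain that $K$ is flat, whence $\dimpl(M)\leq n$. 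Reflection of flatness is immediate from the compatibility $(u\otimes_R M)\ps\cong u\ps\otimes_{R\ps}M\ps$ (coarsening is monoidal) together with the fact that $\bullet\ps$ reflects monomorphisms. The \emph{main obstacle} is respecting flatness: the Lambek route through injective cogenerators is unavailable here, since coarsening does not in general preserve injectivity, so it cannot be used in a part whose conclusion is unconditional. Instead I would use the exact faithful right adjoint $\iota$ and the projection formula $\iota(A)\otimes_R M\cong\iota(A\otimes_{R\ps}M\ps)$: for a monomorphism $v$ in $\md(R\ps)$ the morphism $\iota(v)$ is a monomorphism, flatness of $M$ makes $\iota(v)\otimes_R M$ a monomorphism, and since $\iota$ reflects monomorphisms, so is $v\otimes_{R\ps}M\ps$. (Alternatively, one invokes that a graded module is flat precisely when its underlying module is flat, which reduces both flatness conditions to the identical ungraded one.)

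For the injective dimension I apply the dual of \ref{dim40} furnished by \ref{dim50}. Since $\md(R)$ has enough injectives and $\bullet\ps$ is exact, the equality $\dimi(M)=\dimi(M\ps)$ holds for every $M$ if and only if $\bullet\ps$ respects and reflects injectivity. Reflection of injectivity holds unconditionally: $\iota$ preserves injectives, being right adjoint to the exact functor $\bullet\ps$, and the unit $M\to\iota(M\ps)$ is a split monomorphism, its degree-$g$ component being the inclusion of $M_g$ into the summand indexed by $g$ in $\iota(M\ps)_g=\bigoplus_{\psi(g')=\psi(g)}M_{g'}$, with the projection onto that summand an $R$-linear retraction; hence $M$ is a direct summand of an injective, and so is injective whenever $M\ps$ is. Consequently preservation of $\dimi$ is equivalent to $\bullet\ps$ respecting injectivity, which by the previous results on coarsening of injectives occurs exactly when $\ke(\psi)$ is finite. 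Concretely, when $\ke(\psi)$ is finite the fibres $\psi^{-1}(h)$ are finite, so $\iota$ is simultaneously a left adjoint of $\bullet\ps$, and being an exact left adjoint forces $\bullet\ps$ to preserve injectives; when $\ke(\psi)$ is infinite one exhibits an injective $E$ with $E\ps$ non-injective, so by the dual of \ref{dim40} the equality of injective dimensions fails. This yields b) and completes the proof.
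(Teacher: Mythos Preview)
Your argument is correct and follows the same strategy as the paper's: reduce each case to whether $\bullet\ps$ respects and reflects projectivity, flatness, or injectivity, then apply \ref{dim40}, its dual from \ref{dim50}, and \ref{dim110}. The only difference is that where the paper simply cites \cite[A.I.2.2, A.I.2.18]{nvo1} and \cite[2.3]{cihf} for these preservation and reflection properties, you supply self-contained arguments via the right adjoint $\iota$ (left adjoint of an exact functor preserves projectives; the projection formula $\iota(A)\otimes_R M\cong\iota(A\otimes_{R\ps}M\ps)$ for preservation of flatness; the split unit $M\hookrightarrow\iota(M\ps)$ for reflection of injectivity), which makes your write-up more self-contained at the cost of some length.
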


\begin{proof}
The category $\md(R)$ is abelian and has enough projectives and injectives (\cite[pp.\ 5f.]{nvo1}). Moreover, $\bullet\ps$ is exact and respects and reflects projectivity and flatness (\cite[A.I.2.2, A.I.2.18]{nvo1}); it respects and reflects injectivity if and only if $\ke(\psi)$ is finite (\cite[2.3]{cihf}). Thus, the claims about $\dimp$ and $\dimi$ follow from \ref{dim40}. Furthermore, $\bullet\ps$ turns a flat resolution of $M$ of length $n$ into a flat resolution of $M\ps$ of length $n$, implying $n\dfgl\dimpl(M\ps)\leq\dimpl(M)$. If $n=\infty$, then the remaining claim is clear. If $n<\infty$, then there exists an exact sequence \[P_{n-1}\xrightarrow{p_{n-1}}\cdots\longrightarrow P_0\longrightarrow M\longrightarrow 0\] in $\md(R)$ where $P_i$ is flat for every $i\in[0,n-1]$. Applying $\bullet\ps$ yields an exact sequence \[(P_{n-1})\ps\xrightarrow{(p_{n-1})\ps}\cdots\longrightarrow (P_0)\ps\longrightarrow M\ps\longrightarrow 0\] in $\md(R\ps)$ where $(P_i)\ps$ is flat for every $i\in[0,n-1]$, hence $\ke(p_{n-1})\ps=\ke((p_{n-1})\ps)$ is flat (\ref{dim110}), thus $\ke(p_{n-1})$ is flat, too, and thus $\dimpl(M)\leq n$.
\end{proof}


\noindent\textbf{Acknowledgement:} I am grateful to Uriya First for showing me how to use Schanuel's Lemma to prove \ref{dim20}.

\end{document}